\pdfoutput=1
\documentclass[11pt]{article}
\usepackage[margin=1in]{geometry}

\usepackage{amsmath, amssymb, amsfonts}
\usepackage{amsthm}
\usepackage{mathtools, dsfont, stackrel, booktabs, bbm}
\usepackage{multirow}
\usepackage{float}
\usepackage{capt-of}
\usepackage{url,verbatim}
\usepackage{enumitem}
\usepackage{xargs}
\usepackage{color}
\usepackage{hyperref}
\usepackage{microtype}
\usepackage{bm}

\usepackage{tikz}
\usetikzlibrary{
  matrix,
  shapes,
  arrows.meta,
  positioning,
  fit,
  backgrounds,
  patterns,
  calc,
  decorations.pathreplacing
}

\tikzset{
  block/.style={
    draw, rounded corners, thick,
    align=center,
    minimum height=1.1cm,
    minimum width=2.6cm,
    inner sep=6pt
  },
  smallblock/.style={
    draw, rounded corners, thick,
    align=center,
    minimum height=0.9cm,
    minimum width=2.2cm,
    inner sep=5pt
  },
  decision/.style={
    draw, diamond, thick,
    align=center,
    inner sep=2pt,
    minimum width=2.2cm,
    minimum height=1.2cm
  },
  arrow/.style={-Latex, thick},
  dashedbox/.style={draw, dashed, rounded corners, inner sep=6pt},
  note/.style={align=left, font=\small}
}

\DeclareMathOperator*{\argmin}{arg\,min}

\usetikzlibrary{
  shapes,
  arrows.meta,
  positioning,
  fit,
  backgrounds,
  patterns,
  calc,
  decorations.pathreplacing
}

\usepackage{wrapfig} 
\usepackage{listings}  
\usepackage{xcolor}

\usepackage{adjustbox} 
\usepackage{subcaption} 

\lstdefinestyle{Pytorch}{
    language         = Python,
    backgroundcolor  = \color{white},
    basicstyle = \fontsize{8.0pt}{9pt}\selectfont\ttfamily\bfseries,
    columns          = fullflexible,
    breaklines       = true,
    captionpos       = b,
    commentstyle     = \fontsize{4pt}{4pt}\color{codeblue},
    keywordstyle     = \fontsize{4pt}{4pt}\color{codekw},
    morekeywords     = {with,scatter_,norm,sort},
}
\definecolor{codeblue}{rgb}{0.0, 0.0, 0.5} 
\definecolor{codekw}{rgb}{0.0, 0.0, 0.5} 

\definecolor{Gray}{gray}{0.95}

\lstdefinestyle{Pytorch}{
    language         = Python,
    backgroundcolor  = \color{white},
    basicstyle = \fontsize{8.0pt}{9pt}\selectfont\ttfamily\bfseries,
    columns          = fullflexible,
    breaklines       = true,
    captionpos       = b,
    commentstyle     = \fontsize{4pt}{4pt}\color{codeblue},
    keywordstyle     = \fontsize{4pt}{4pt}\color{codekw},
    morekeywords     = {with,scatter_,norm,sort},
}

\usepackage{algorithmic}
\usepackage{algorithm}

\newtheorem{proposition}{Proposition}

\theoremstyle{definition}

\theoremstyle{remark}

\setlist[itemize]{leftmargin=*, topsep=2pt, itemsep=2pt}
\setlist[enumerate]{leftmargin=*, topsep=2pt, itemsep=2pt}

\title{A GPU-accelerated Nonlinear Branch-and-Bound Framework for Sparse Linear Models}
\author{
  Ryan Lucas\thanks{Equal contribution.}\\MIT Operations Research Center\\\texttt{ryanlu@mit.edu}
  \and
  Xiang Meng\footnotemark[1]\\MIT Operations Research Center\\\texttt{mengx@mit.edu}
  \and
  Rahul Mazumder\\MIT Sloan School of Management, ORC, and MIT Center for Statistics\\\texttt{rahulmaz@mit.edu}
}
\date{}

\begin{document}
\maketitle

\begin{abstract}
We study exact sparse linear regression with an $\ell_0$--$\ell_2$ penalty and develop a branch-and-bound (BnB) algorithm that is explicitly designed for GPU execution. Starting from a perspective reformulation, we derive an interval relaxation that can be solved by ADMM with closed-form, coordinate-wise updates. We structure these updates so that the main work at each BnB node reduces to batched matrix--vector operations with a shared data matrix, enabling fine-grained parallelism across coordinates and coarse-grained parallelism across many BnB nodes on a single GPU. Feasible solutions (upper bounds) are generated by a projected gradient method on the active support, which is implemented in a batched fashion so that many candidate supports are updated in parallel on the GPU. We discuss practical design choices such as memory layout, batching strategies, and load balancing across nodes that are crucial for obtaining good utilization on modern GPUs. On synthetic and real high-dimensional datasets, our GPU-based approach achieves clear runtime improvements over a CPU implementation of our method, an existing specialized BnB method, and commercial MIP solvers.
\end{abstract}

\vspace{0.5em}
\noindent\textbf{Keywords:} Branch-and-Bound; Mixed Integer Programming; Sparse Learning; GPU

\section{Introduction} 

Mixed integer programming (MIP) and especially mixed integer nonlinear programming (MINLP), has emerged as a promising tool for modeling and computation of foundational statistics and machine learning (ML) problems that admit a combinatorial description \cite{bertsimas2019modern}. Common examples include problems such as high dimensional sparse regression \cite{hazimeh2021sparse, bertsimas2020sparse}, graphical models \cite{behdin2023sparsegaussiangraphicalmodels}, decision trees \cite{bertsimas2017optimal}, robust statistics (outlier detection) \cite{gómez2023outlierdetectionregressionconic}, among others \cite{tillmann2022cardinalityminimizationconstraintsregularization}. MIP-based estimators in high-dimensional statistics are known to enjoy optimal statistical properties in suitable regimes  \cite{wainwright2019high, zhang2014lower, Gamarnik2022, mazumder2022subset, behdin2021sparsepcanewscalable,hazimeh2021groupedvariableselectiondiscrete}.

In this paper we focus on a canonical problem from this family, the sparse linear regression with $\ell_2$ regularization. Given a dataset \((X, y) \in \mathbb{R}^{n \times p} \times \mathbb{R}^n\), where \(X\) is the design matrix containing \(n\) samples and \(p\) features, and \(y\) is the response vector, the $\ell_0$-$\ell_2$ penalized least squares problem is given by:
\begin{equation} \label{eq:original}
    \min _{\beta \in \mathbb{R}^p} \frac{1}{2}\|y-X \beta\|_2^2+\lambda_0\|\beta\|_0+\lambda_2\|\beta\|_2^2,
\end{equation}
where \(\beta\) denotes the regression coefficients, The \(\ell_0\)-(pseudo) norm \(\|\beta\|_0\) counts the number of nonzero elements in \(\beta\) and the squared \(\ell_2\)-norm \(\|\beta\|_2^2\) imposes a penalty on the magnitude of the coefficients. The regularization parameters \(\lambda_0 \geq 0\) and \(\lambda_2 \geq 0\) are hyperparameters selected by practitioners to control the trade-off between model fit, sparsity, and shrinkage. Specifically, \(\lambda_0\) controls the level of sparsity by penalizing the inclusion of less relevant features, while \(\lambda_2\) regulates the degree of shrinkage applied to the coefficients. Several works have explored the statistical properties of $\ell_0$-based estimators~\cite{greenshtein2006best,candes2013well,zhang2014lower,david2017high,wainwright2009information,dedieu2021learning}.  Under suitable assumptions, global solutions to problem \eqref{eq:original} achieve optimal support recovery \cite{fletcher2009necessary} and prediction error bounds \cite{raskutti2011minimax}. These strong guarantees generally do not extend to approximate solutions obtained from heuristic methods. Moreover, \cite{mazumder2022subset} demonstrates that the $\ell_2$ term helps mitigate overfitting in low signal-to-noise ratio (SNR) settings.

Despite its usefulness, solving problem \eqref{eq:original} poses substantial computational challenges as the problem is $\mathcal{NP}$-hard \cite{natarajan1995sparse}. Significant efforts have been devoted to developing exact algorithms based on MIP. Off-the-shelf MIP solvers reformulate the problem into a MIP framework and leverage powerful branch-and-bound (BnB) techniques inherent in commercial solvers like Gurobi and CPLEX. These MIP solvers can solve moderate-sized instances of \eqref{eq:original}, with  \(n \sim p \lesssim 10^3\), to global optimality, but struggle with larger instances~\cite{bertsimas2016best}. There have been several works designing specialized algorithms that are capable of handling larger-scale problems within a MIP framework \cite{hazimeh2021sparse, bertsimas2020sparse, han2025compact, atamturk2020rankoneconvexificationsparseregression}. Nevertheless, the computational cost of certifying global optimality often requires traversing a massive BnB search tree, which can result in large run-times. We aim to advance the computational practice of solving problem \eqref{eq:original} by developing methods to reduce the time required to certify optimality for large-scale problems. Different from earlier work in this area, we consider algorithmic development keeping in mind {\emph{hardware-algorithm co-design}}, especially in using GPUs to accelerate the overall BnB procedure, as discussed next.

Most existing algorithms for problem \eqref{eq:original} are designed to run on classical CPU architectures, while approaches designed to be run on more scalable computing hardware such as GPUs have been explored to a lesser extent (See section \ref{sec:related_work}). A GPU can perform on the order of tens of trillions of floating point operations per second, where a typical CPU does orders less \cite{sountsov2024runningmarkovchainmonte}. GPUs have been used extensively in scientific computing to accelerate compute-intensive workloads in fields such as atmospheric modeling \cite{Kelly2010} and physical simulations \cite{Navarro2014}, among others. GPU-based computing has also become the central paradigm powering the enormous success of deep learning approaches for high-dimensional unstructured data~\cite{MITTAL2019101635, Buber2018, pandey2022}. More recently, GPUs have also begun to be used in optimization, a notable example being specialized linear programming solvers that exploit  parallelism on GPUs~\cite{Jung2008,Spamp2009,swirydowicz2022linear,lu2024cupdlpjlgpuimplementationrestarted}. 
However, their implications beyond this family of problems especially non-linear BnB remain to be fully explored. Recently, \cite{liu2025scalable} propose a first-order method for solving the relaxed perspective formulation with a cardinality constraint to certify optimal k-sparse GLMs, leveraging GPU acceleration solely for computing the lower bound at a single node.

The holistic integration of GPUs into a BnB framework is non-trivial due to a fundamental mismatch between the hardware and traditional algorithms. GPUs are designed for massive parallelism, and excel when executing identical operations across large blocks of data, whereas BnB involves traversing a search tree in an irregular manner. To fully leverage GPU acceleration, in this paper we develop new algorithmic methodology integrating different components of a non-linear BnB framework that exploit parallel execution to solve Problem~\eqref{eq:original}. For the node relaxation algorithm, we propose an integrated approach centered on a specialized First-Order Method (FoM) designed for simultaneous variable updates at each node. Specifically, the updates within our FoM are  parallelizable across the elements of the decision variables, which leads to large speed-ups when executed on GPUs. A direct consequence of this FoM is the ability to compute primal solutions that are sparse (i.e., few nonzeros in $\beta$) and valid dual bounds. Furthermore, this algorithmic structure allows for the simultaneous solving of multiple subproblems, enabling parallelism \textit{across the BnB nodes}, leading to faster exploration of the tree and consequently faster dual bound certification. Finally, we design the algorithm so that the decision variables can be heavily warm-started, leading to significant reductions in per-node solve times as the tree is traversed. As illustrated in Figure~\ref{fig:gpu_bnb_overview}, we keep the lightweight BnB logic including node selection, branching, and global bound updates on the CPU, while offloading the computationally expensive lower- and upper-bound algorithms to the GPU.

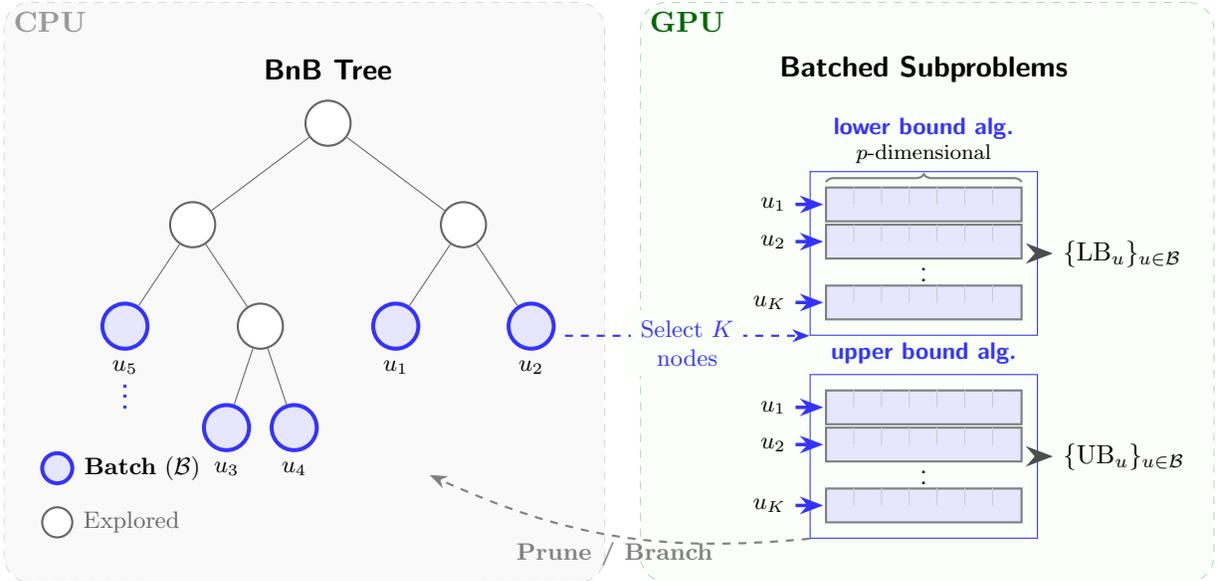
\begin{figure}[H]
\centering
\begin{adjustbox}{max width=\linewidth}
\begin{tikzpicture}[
    scale=0.85,
    font=\sffamily,
    >=Stealth,
    tnode/.style={circle, draw=black!60, thick, minimum size=0.6cm, inner sep=0pt, fill=white},
    active/.style={circle, draw=blue!80, line width=1.5pt, minimum size=0.6cm, inner sep=0pt, fill=blue!10},
    cpu_region/.style={draw=gray!30, dashed, fill=gray!5, rounded corners=10pt, inner sep=15pt},
    gpu_region/.style={draw=green!50!black!30, dashed, fill=green!2, rounded corners=10pt, inner sep=15pt},
    lb_row/.style={
        rectangle,
        draw=black!50,
        thick,
        fill=blue!10,
        minimum height=0.45cm,
        minimum width=2.6cm,
        inner sep=0pt,
        outer sep=0pt
    },
    ub_row/.style={
        rectangle,
        draw=black!50,
        thick,
        fill=blue!10,
        minimum height=0.45cm,
        minimum width=2.6cm,
        inner sep=0pt,
        outer sep=0pt
    },
    grid_line/.style={
        draw=black!20,
        thin
    },
    proc_arrow/.style={
        ->,
        line width=1.3pt,
        shorten >=1pt
    }
]

    \node[cpu_region, fit={( -4.2, 1.2) ( 3.5, -6.2)}] (cpu_box) {};
    \node[anchor=north west, text=gray!80, font=\bfseries] at (cpu_box.north west) {CPU};
    
    \node[gpu_region, fit={( 5.2, 1.2) ( 12.5, -6.2)}] (gpu_box) {};
    \node[anchor=north west, text=green!40!black, font=\bfseries] at (gpu_box.north west) {GPU};

    \node[tnode] (root) at (0,0) {};
    \node[tnode] (L1_1) at (-2, -1.5) {}; \node[tnode] (L1_2) at (2, -1.5) {};
    \draw[black!60] (root) -- (L1_1); \draw[black!60] (root) -- (L1_2);
    
    \node[active, label=below:\footnotesize $u_5$] (L2_1) at (-3, -3) {}; 
    \node[tnode]  (L2_2) at (-1, -3) {};
    
    \node[active, label=below:\footnotesize $u_1$] (L2_3) at (1, -3) {};
    \node[active, label=below:\footnotesize $u_2$] (L2_4) at (3, -3) {};
    
    \draw[black!60] (L1_1) -- (L2_1); \draw[black!60] (L1_1) -- (L2_2);
    \draw[black!60] (L1_2) -- (L2_3); \draw[black!60] (L1_2) -- (L2_4);
    
    \node[active, label=below:\footnotesize $u_3$] (L3_1) at (-1.5, -4.5) {};
    \node[active, label=below:\footnotesize $u_4$] (L3_2) at (-0.5, -4.5) {};
    
    \node[below=0.1cm of L2_1, text=blue!80] (tree_dots) {\Large $\vdots$};
    
    \draw[black!60] (L2_2) -- (L3_1); \draw[black!60] (L2_2) -- (L3_2);
    \node[align=center] at (0, 0.8) {\textbf{BnB Tree}};

    \begin{scope}[shift={(-4, -5.5)}]
        \node[active, minimum size=0.4cm] (l1) at (0, 0.4) {};
        \node[anchor=west, font=\footnotesize] at (l1.east) {\textbf{Batch} ($\mathcal{B}$)};
        \node[tnode, minimum size=0.4cm] (l2) at (0, -0.4) {};
        \node[anchor=west, font=\footnotesize, text=black!60] at (l2.east) {Explored};
    \end{scope}

    \node (stack_label) at (8.8, 0.8) {\textbf{Batched Subproblems}};

    \node[lb_row, label={[xshift=-0.4cm]left:\footnotesize $u_1$}] (Beta1) at (8.8, -1.2) {};
    \node[lb_row, label={[xshift=-0.4cm]left:\footnotesize $u_2$}] (Beta2) at (8.8, -1.75) {};
    \node (BetaDots) at (8.8, -2.2) {$\vdots$};
    \node[lb_row, label={[xshift=-0.4cm]left:\footnotesize $u_K$}] (BetaK) at (8.8, -2.65) {};

    \foreach \nodename in {Beta1, Beta2, BetaK} {
        \begin{scope}
            \clip (\nodename.south west) rectangle (\nodename.north east);
            \foreach \i in {1,...,6} {
                 \draw[grid_line] ($(\nodename.west)!0.142*\i!(\nodename.east)$) -- ++(0,1);
            }
        \end{scope}
        
        \draw[proc_arrow, blue!80]
            ($(\nodename.west) - (0.45, 0)$) -- (\nodename.west);
    }

    \node[
        draw=blue!80,
        fit=(Beta1)(BetaK),
        inner sep=6pt,
        label={[name=lbl_lb, yshift=3mm]above:\footnotesize\textcolor{blue!80}{\textbf{lower bound alg.}}}
    ] (MatrixBeta) {};
    
    \draw[decorate, decoration={brace, amplitude=3pt}, thick, gray]
        ($(Beta1.north west) + (0, 0.08)$) -- ($(Beta1.north east) + (0, 0.08)$)
        node[midway, above=3pt, font=\scriptsize, text=black] {$p$-dimensional};

    \node[ub_row, label={[xshift=-0.4cm]left:\footnotesize $u_1$}] (B1) at (8.8, -4.2) {};
    \node[ub_row, label={[xshift=-0.4cm]left:\footnotesize $u_2$}] (B2) at (8.8, -4.75) {};
    \node (BDots) at (8.8, -5.2) {$\vdots$};
    \node[ub_row, label={[xshift=-0.4cm]left:\footnotesize $u_K$}] (BK) at (8.8, -5.65) {};

    \foreach \nodename in {B1, B2, BK} {
        \begin{scope}
            \clip (\nodename.south west) rectangle (\nodename.north east);
            \foreach \i in {1,...,6} {
                 \draw[grid_line] ($(\nodename.west)!0.142*\i!(\nodename.east)$) -- ++(0,1);
            }
        \end{scope}
        
        \draw[proc_arrow, blue!80]
            ($(\nodename.west) - (0.45, 0)$) -- (\nodename.west);
    }

    \node[
        draw=blue!80,
        fit=(B1)(BK),
        inner sep=6pt,
        label={[name=lbl_ub]above:\footnotesize\textcolor{blue!80}{\textbf{upper bound alg.}}}
    ] (MatrixB) {};

    
    \draw[->, blue!80, dashed, thick]
         (3.5, 0 |- MatrixBeta.south west) -- (MatrixBeta.south west)
         node[midway, fill=green!2, align=center, font=\footnotesize, yshift=-0.1cm] {Select $K$\\nodes};

    \node[align=center, right=0.2cm of MatrixBeta, font=\small] (out_LB) {$\{\text{LB}_{u}\}_{u \in \mathcal{B}}$};
    \node[align=center, right=0.2cm of MatrixB, font=\small] (out_UB) {$\{\text{UB}_{u}\}_{u \in \mathcal{B}}$};

    \draw[->, ultra thick, black!70] (MatrixBeta.east) -- (out_LB.west);
    \draw[->, ultra thick, black!70] (MatrixB.east) -- (out_UB.west);

    \draw[->, thick, gray, dashed] (MatrixB.south west) to[bend left=15]
        node[midway, below, align=center, font=\footnotesize, inner sep=2pt] {\textbf{Prune / Branch}}
        (1.5, -5.2);

\end{tikzpicture}
\end{adjustbox}
\caption{High-level view of our GPU-accelerated BnB algorithm. A batch $\mathcal{B}$ of $K$ active nodes from the CPU-side BnB tree is mapped to batched node subproblem algorithms on the GPU. Each node $u \in \mathcal{B}$ is solved in parallel and the resulting lower ($\text{LB}_u$) and upper bounds ($\text{UB}_u$) are fed back to guide branching and pruning decisions on CPU.}
\label{fig:gpu_bnb_overview}
\end{figure}

\subsection{Summary of Contributions} Our contributions in the paper are as follows:
\begin{itemize}
\item At the core of our BnB framework is a node relaxation algorithm that is highly amenable to parallel computation. By introducing auxiliary and dual variables, we decompose each BnB node subproblem into components that can be solved independently via the Alternating Direction Method of Multipliers (ADMM) algorithm \cite{boyd2011}. Using problem-structure, we choose the ADMM splitting such that the solution to each of the resulting ADMM subproblems are computed in closed-form and highly parallel, meaning they can be accelerated on GPUs.

\item We leverage the separability of our node relaxation algorithm to parallelize subproblems across nodes in the BnB tree. Since problem (\ref{eq:original}) can have an extremely large number of binary variables, this strategy leads to dramatic speed-ups compared to solving nodes sequentially.

\item We introduce an upper bound algorithm (primal heuristic) based on projected gradient descent that quickly generates high-quality feasible solutions. To fully leverage GPU acceleration, this algorithm is also designed to be parallel both within a node and across nodes in the BnB tree. 

\item Our empirical evaluation demonstrates that our GPU-accelerated BnB algorithm strongly outperforms both commercial solvers and prior custom methods.  On large instances (e.g. $n=10^4$, $p=10^5$), even without node parallelism, our approach GPU-based method achieves a $35\times$ speedup over our CPU implementation and a $6\times$ speedup over L0BnB~\cite{hazimeh2021sparse}; at this scale, commercial solvers exceed memory limits. Adding node parallelism on top can lead to an up to 25x increase in nodes solved per second, corresponding to much faster solving times.

\end{itemize}


\subsection{Related work}\label{sec:related_work}

There have been various MIP-based approaches to solve problem \eqref{eq:original} as well as the cardinality-constrained version. \cite{bertsimas2016best} solve the cardinality-constrained problem to optimality using off-the-shelf MIP solver Gurobi. \cite{bertsimas2020sparse} proposed an outer-approximation approach for Problem~\eqref{eq:original} 
by constructing piece-wise lower bounds to the convex objective. Using Gurobi to solve the integer linear sub-problems, this scales to $p \approx 10^4$--$10^5$ when $n$ and $\lambda_2$ are sufficiently large. \cite{hazimeh2020fast} propose scalable algorithms to generate high-quality feasible solutions for large instances of Problem~\eqref{eq:original}. \cite{hazimeh2021sparse} present an integrated BnB framework for Problem~\eqref{eq:original} using first-order methods (coordinate descent) to solve the node relaxations. They can optimally solve instances with $n \approx 10^3$, $p \approx 10^5$ or larger values of $p$. Our approach is in the same algorithmic vein: we retain a BnB-based strategy for certifying optimality, but reformulate the per-node computations to enable parallel execution on GPUs both within node subproblems and across multiple node subproblems, significantly accelerating large-scale instances compared to prior work.
\cite[Ch.~5]{chen_2023} presents an extension of~\cite{hazimeh2021sparse} to $\ell_0$-$\ell_2$-penalized generalized linear models (GLMs) such as least squares and logistic loss.  
Recently, Liu et al.~\cite{liu2025scalable} extend the BnB framework of~\cite{hazimeh2021sparse} to consider GLMs with cardinality constraints. To solve the node relaxations, \cite{liu2025scalable} use accelerated proximal-gradient descent and leverage GPU acceleration (they consider GPU-acceleration within a node and not across the nodes).  A complementary line of work develops stronger convex relaxations for related mixed-integer quadratic formulations with indicator variables \cite{atamturk2020rankoneconvexificationsparseregression, bhathena2024parametricapproachsolvingconvex}.

The use of GPUs more broadly in optimization has gained significant traction, from enhancing local search subroutines or other heuristics  within serial algorithms 
 \cite{SCHULZ2013159} to the development of novel solvers explicitly designed to exploit parallel processing capabilities~\cite{anzt2014optimizing, tomov2010dense, Abbas_2022_CVPR, pacaud2023, shin2024acceleratingoptimalpowerflow}. One area where GPUs are gaining popularity are linear programming solvers \cite{Jung2008, Spamp2009, lu2024cupdlpjlgpuimplementationrestarted, swirydowicz2022linear}. Traditional linear programming algorithms, such as the simplex method, have been highly optimized for serial execution on CPUs, and do not naturally exploit parallelism. Recent work studies alternatives to fast serial algorithms with new parallel algorithms that can better exploit GPU processing capabilities \cite{lu2024cupdlpjlgpuimplementationrestarted}. Similarly, in convex optimization, conic solvers using interior-point methods are typically the go-to general-purpose algorithm, but pose challenges due to the memory requirements of solving the KKT system on GPUs \cite{pacaud2023}. Moreover, within BnB algorithms, thousands or millions of subproblems may need to be solved, and warm-starting becomes a crucial strategy but is not always exploitable via methods like interior-point algorithms \cite{anders2005}. Here, there is clearly an interplay emerging between algorithmic development and the strengths and weaknesses of the target hardware.

While parallel implementations of BnB algorithms have been studied since the 1990s and before \cite{Clausen1997}, these approaches have largely focused on coarse-grained parallelism using either shared-memory or message-passing systems. Early attempts at fine-grained parallelism were limited by synchronization overhead and memory constraints, making them suitable only for small problem instances \cite{Gendron94}. Our work represents (to our knowledge) the first parallel BnB implementation that effectively exploits modern GPU architectures' capabilities, enabling both parallel subproblem solving and parallel node exploration at scale.

\section{Preliminaries: MIP Formulation and Branch-and-Bound Solver}
We present MIP formulation of problem \eqref{eq:original}, and a sketch of our BnB framework.

\subsection{Perspective formulation with Big-M constraint }
We consider a MIP formulation of problem \eqref{eq:original}, incorporating Big-M constraints and a perspective reformulation
\cite{hazimeh2021sparse,akturk2009strong}. 

We define a constant $M > 0$ such that an optimal solution $\beta^*$ of problem \eqref{eq:original} satisfies $\|\beta^*\|_{\infty} \leq M$. Binary variables $z_i, i \in [p]$, are introduced to control whether each $\beta_i$ is zero, enforced through the constraints $-M z_i \leq \beta_i \leq M z_i, \, i \in [p]$. To reformulate the ridge term $\|\beta\|_2^2$, we introduce auxiliary continuous variables $s_i \geq 0$ and impose rotated second-order cone constraints $\beta_i^2 \leq s_i z_i$ for each $i \in [p]$. The term $\lambda_2\|\beta\|_2^2$ is then replaced with $\lambda_2\sum_{i \in [p]} s_i$. This results in the following perspective reformulation:
\begin{equation} \label{eq:perspective}
\begin{aligned}
 \min _{\beta, z, s} \quad & \frac{1}{2}\|y-X \beta\|_2^2+\lambda_0 \sum_{i \in[p]} z_i+\lambda_2 \sum_{i \in[p]} s_i \\
\text { s.t. } \quad &  -M z_i \leq \beta_i \leq M z_i, ~~ i \in[p], \\
&\beta_i^2 \leq s_i z_i,~~~\,\qquad\qquad i \in[p], \\
& z_i \in\{0,1\}, s_i \geq 0, \,\,~~~~i \in[p].
\end{aligned}
\end{equation}
As long as $M$ is selected appropriately, an optimal solution to the perspective formulation \eqref{eq:perspective} is also an optimal solution to the original problem \eqref{eq:original}. Practical approaches for estimating $M$ can be found in \cite{bertsimas2016best,xie2020scalable}. Problem \eqref{eq:perspective}, a mixed-integer second-order conic problem, becomes a second-order cone problem when each binary variable is relaxed to $z_{i} \in [0, 1]$ for all $i$.


\subsection{Overview of our Custom Branch-and-Bound framework}  \label{subsect:bnb}

We present an overview of our BnB framework for Problem \eqref{eq:perspective}. We identify parts of our BnB framework that align with traditional BnB, and then design our subproblem solvers to take advantage of parallelism in the BnB tree. Section \ref{sect:gpubnb} presents more detail in designing each component to be accelerated with GPUs. 

\subsubsection{Traditional Branch-and-Bound}

The core principle of BnB is to systematically explore the solution space by dividing it into smaller subproblems (branching) and eliminating parts of the search space that cannot contain the optimal solution (bounding). We sketch the components of our algorithm below, resembling a typical BnB procedure:

\begin{itemize}
    \item \textbf{Binary branching.}
    For problem \eqref{eq:perspective}, each subproblem of the solution space can be represented by two sets, $(\mathcal{F}_0, \mathcal{F}_1)$, which correspond to solving the problem with additional constraints: $z_i = 0$ for all $i \in \mathcal{F}_0$ and $z_i = 1$ for all $i \in \mathcal{F}_1$. The BnB framework explores the solution space using a binary tree structure. It begins at the root node, where the subproblem is $(\mathcal{F}_0, \mathcal{F}_1) = (\varnothing, \varnothing)$. For branching, the algorithm selects a variable, say $j \in [p]$, for the current node and creates two new nodes: one with $(\mathcal{F}_0 \cup \{j\}, \mathcal{F}_1)$ and another with $(\mathcal{F}_0, \mathcal{F}_1 \cup \{j\})$.

    \item \textbf{Node relaxation algorithm (lower bounds).}
    At each node, we solve an interval relaxation subproblem to obtain a lower bound on the best achievable objective within that node. This relaxation provides the dual (lower) bound used to decide whether a subproblem and its descendants can be safely discarded.

    \item \textbf{Upper-bound algorithm (primal solutions).}
    An upper-bound solver quickly computes an approximate feasible solution for the node, which provides an upper bound to problem \eqref{eq:perspective} as a whole. These feasible solutions update the best-known objective value and thus tighten the global upper bound.

    \item \textbf{Pruning and search strategy.}
    To reduce the search tree size, BnB prunes nodes under two conditions: (i) if the lower-bound solver at the current node produces an integer solution for $z$ or (ii) if the node's lower bound exceeds the best-known upper bound. While various node selection strategies can be used (e.g., breadth-first search or depth-first search), we adopt a best-first node selection strategy \cite{linderoth1999computational} to accelerate the search further---at each step, BnB selects the node or nodes with the smallest lower bound to solve.

    \item \textbf{Global lower bound and termination.}
    The global lower bound for the BnB tree is the smallest lower bound among all open leaf nodes. As the algorithm progresses, the global lower bound converges to the optimal objective of Problem~\eqref{eq:perspective}. In practice, the algorithm can be terminated early if the gap between the best upper and lower bounds falls below a predefined threshold.
\end{itemize}
\subsection{Parallel Node Subproblem Solver}

At each node in the BnB tree, we compute both an upper bound and a lower bound relaxation of problem \eqref{eq:perspective}. These are typically the most computationally intensive portions of any BnB algorithm, being the great majority of the overall runtime. Our framework employs two specialized algorithms designed for parallel computation: an ADMM-based lower bound algorithm and a projected gradient descent based upper bound algorithm. These algorithms are structured to enable parallel computation across the coordinates of the optimization variables, making them highly efficient on GPU architectures. The ADMM algorithm, detailed in Section \ref{subsect:ADMM}, decomposes the problem into subproblems that can be solved in parallel on GPUs. Similarly, the upper bound algorithm detailed in Section \ref{sec: upper_bound} operates on the entire support simultaneously rather than cycling through the decision variables.

\subsection{Parallel Tree Exploration}

As noted in before, our ADMM-based node relaxation and proximal-gradient upper bound algorithms are explicitly designed to support parallelism within a subproblem. However, a key feature of our approach is that the very same structure also enables parallelism across nodes in the BnB tree. Let $\mathcal{N}$ denote the set of open (unexplored) nodes and let $\mathrm{UB}$, $\mathrm{LB}$ denote the current global upper and lower bounds. At each iteration, we select a batch:
\[
\mathcal{B} \subseteq \mathcal{N}, \quad |\mathcal{B}| = \min(K, |\mathcal{N}|),
\]
consisting of up to $K$ nodes. For every node $u \in \mathcal{B}$, defined by fixings $(\mathcal{F}_0^{(u)}, \mathcal{F}_1^{(u)})$, we compute in parallel:
\begin{enumerate}
    \item a dual bound $\mathrm{LB}_u$ via the batched ADMM relaxation (Sections~\ref{subsect:ADMM_parallel}),
    \item a feasible solution $\beta_u^{\mathrm{feas}}$ with cost $\mathrm{UB}_u$ via the batched projected gradient method (Section~\ref{sec:upper_bound_parallel}).
\end{enumerate}
The global bounds are then updated as:
\[
\mathrm{UB} \leftarrow \min\bigl\{\mathrm{UB},\; \min_{u \in \mathcal{B}}\, \mathrm{UB}_u\bigr\}, \qquad \mathrm{LB} \leftarrow \min_{u \in \mathcal{N}}\, \mathrm{LB}_u,
\]
and every node $u$ satisfying $\mathrm{LB}_u \geq \mathrm{UB}$ is pruned from $\mathcal{N}$. Remaining nodes are branched: for a chosen index $j \in [p] \setminus (\mathcal{F}_0^{(u)} \cup \mathcal{F}_1^{(u)})$, two children $(\mathcal{F}_0^{(u)} \cup \{j\},\, \mathcal{F}_1^{(u)})$ and $(\mathcal{F}_0^{(u)},\, \mathcal{F}_1^{(u)} \cup \{j\})$ are added to $\mathcal{N}$. The algorithm terminates when $\mathcal{N} = \emptyset$ or $(\mathrm{UB} - \mathrm{LB})/\mathrm{UB} \leq \mathrm{tol}$. The full procedure is summarized in Algorithm~\ref{algo:BnB-approx}.


\begin{algorithm}[H]
\small
\begin{algorithmic}[1]
\REQUIRE Batch size $K$, inputs; \textbf{Init} $\mathcal{N} \leftarrow \{ u_0 \}$, $\text{UB} \leftarrow \infty, \text{LB} \leftarrow -\infty$
\WHILE{ $\mathcal{N} \neq \emptyset$ and $(\text{UB} - \text{LB}) / \text{UB} > \text{tol}$ }
    \STATE {\color{green!30!blue} \textbf{Select nodes $\mathcal{B} \subseteq \mathcal{N}$ ($\min(K, |\mathcal{N}|)$) \& solve for $\{\text{LB}_u, \text{UB}_u, \beta^{\text{feas}}_u \}_{u \in \mathcal{B}}$ in parallel}}
    \FOR{each $u \in \mathcal{B}$}
        \STATE \textbf{if} $\text{LB}_u \geq \text{UB}$ \textbf{then} Prune node $u$; \textbf{continue}
        \STATE \textbf{if} $\text{UB}_u < \text{UB}$ \textbf{then} $\text{UB} \leftarrow \text{UB}_u, \;\beta^* \leftarrow \beta^{\text{feas}}_u$
        \STATE Select $j \in [p] \backslash (\mathcal{F}_0 \cup \mathcal{F}_1)$; Add $(\mathcal{F}_0 \cup \{j\}, \mathcal{F}_1, \text{LB}_u)$ and $(\mathcal{F}_0 , \mathcal{F}_1 \cup \{j\}, \text{LB}_u)$ to $\mathcal{N}$
    \ENDFOR
    \STATE Update global lower bound $\text{LB} \leftarrow \min\{\text{LB}_u : u \in \mathcal{N}\}$
\ENDWHILE
\RETURN $\beta^*$
\end{algorithmic}
\caption{\small Overview of our parallel BnB solver. The computationally intensive node subproblems are solved in parallel on GPU, indicated in \textbf{{\color{green!30!blue}blue}}, while the lightweight serial BnB logic on CPU remains unchanged.}
\label{algo:BnB-approx}
\end{algorithm}

\section{Branch-and-Bound Algorithm Components}\label{sect:gpubnb}

In this section, we describe the various components of our BnB algorithm in detail. Our goal is to design algorithms that are highly parallel, in contrast to the more serial algorithms introduced in prior work. This approach allows us to leverage the computational power of modern GPUs for finding optimal or near-optimal solutions to problem \eqref{eq:original}. To this end, we develop two key components:

\begin{itemize}
    \item An ADMM-based node relaxation algorithm which fully separates across the elements of the decision variables for each subproblem and across subproblems. A key feature in our approach is that these two forms of parallelism are integrated.

    \item A proximal gradient based upper bound algorithm designed to operate over the entire support at a given node, rather than cycling through the decision variables sequentially.
\end{itemize}

We first describe how parallelism can be implemented within a node, via our ADMM-based node relaxation and proximal gradient upper-bound algorithms (Sections~\ref{subsect:ADMM} and~\ref{sec: upper_bound}), and then how to extend to parallelism across nodes in the BnB tree (Sections~\ref{subsect:ADMM_parallel} and~\ref{sec:upper_bound_parallel}).

\subsection{ADMM-based algorithm for the node relaxations}\label{subsect:ADMM}

We describe an ADMM method for solving the relaxation Problem~\eqref{eq:perspective}.
We first consider a particular node, while parallelism across nodes is discussed in Section~\ref{subsect:ADMM_parallel}. At a given node of the BnB tree whose corresponding upper and lower bounds are $(\mathcal{F}_0,\mathcal{F}_1)$, the interval relaxation of the subproblem reads:
\begin{equation} \label{eq:relaxnode}
\begin{aligned}
 \min _{\beta, z, s} \quad & \frac{1}{2}\|y-X \beta\|_2^2+\lambda_0 \sum_{i \in[p]} z_i+\lambda_2 \sum_{i \in[p]} s_i \\
\text { s.t. } \quad & \beta_i^2 \leq s_i z_i,\,\,\, i \in[p] \\
& -M z_i \leq \beta_i \leq M z_i, \,\,\, i \in[p] \\
& z_i \in [0,1], s_i \geq 0, \,\,\,i \in[p] \\
& z_i=0\, \forall \,i \in \mathcal{F}_0,\,\, z_i=1\,\forall \,i \in \mathcal{F}_1.
\end{aligned}
\end{equation}
To simplify the problem, we use the fact that for a fixed $\beta$, the optimal values of $s$ and $z$ can be computed analytically (see Appendix \ref{sect:app-technical} for derivations following \cite{hazimeh2021sparse}). Substituting the optimal values back into the objective function in terms of $\beta$, we obtain an equivalent problem that depends only on $\beta$:
\begin{equation} \label{eq:relaxnode2}
\begin{aligned}
\min _{\beta \in \mathbb{R}^p} &\,\, \frac{1}{2}\|y-X \beta\|_2^2+\sum_{i \in[p]} \psi_i\left(\beta_i ; \lambda_0, \lambda_2, M\right) \\
\text { s.t. } &\,\,  \|\beta\|_{\infty} \leq M
\end{aligned}
\end{equation}
where  
\begin{equation} \label{eq:psi}
\begin{aligned}
\psi_i\left(\beta_i ; \lambda_0, \lambda_2, M\right)= \begin{cases} \infty\cdot \textbf{1}_{\{\beta_i=0\}} & \text { if } i \in \mathcal{F}_0\\
\lambda_0 + \lambda_2 \beta_i^2 & \text { else if } i \in \mathcal{F}_1 \text{ or } \sqrt{\frac{\lambda_0}{\lambda_2}} \le |\beta_i| \le M\\
2\sqrt{\lambda_0\lambda_2} |\beta_i| & \text { else if } |\beta_i| \le \sqrt{\frac{\lambda_0}{\lambda_2}} \leq M \\ \left(\lambda_0 / M+\lambda_2 M\right)\left|\beta_i\right| & \text { 
 else if } \sqrt{\frac{\lambda_0}{\lambda_2}}>M\end{cases}
\end{aligned}
\end{equation}
Solving problem \eqref{eq:relaxnode2}  using ADMM involves a choice of splitting technique;  that is, a choice of auxiliary variable to decouple the problem. We reformulate problem \eqref{eq:relaxnode2} by introducing a copy $b$ of the primal variable $\beta$: 
\begin{equation} \label{eq:ADMM1}
\begin{aligned}
\min _{b, \beta \in \mathbb{R}^p} &\,\, \frac{1}{2}\|y-X b\|_2^2+\sum_{i \in[p]} \psi_i\left(\beta_i ; \lambda_0, \lambda_2, M\right)  + \infty \cdot \mathbf{1}_{\left\{\|\beta\|_{\infty} \leq M\right\}}\\
\text { s.t. } &\,\,b=\beta
\end{aligned}
\end{equation}

Here there are various splittings that could have been used. For example, a popular splitting is to introduce an auxiliary variable $r = y - X\beta$ (c.f.~\cite{boyd2011}, pg 39). However, this leads to coupled updates involving $X\beta$ that cannot be solved completely in parallel. By decoupling $b$ and $\beta$, we separate the smooth quadratic term involving $b$ from the non-smooth regularization term involving $\beta$, such that both resulting subproblems can be parallelized. The constraint $b = \beta$ ensures consistency between the variables.  ADMM \cite{boyd2011,davis2016convergence} is a powerful iterative first-order optimization method well-suited for problems with such a structure. It considers the augmented Lagrangian function of this problem that enforces the equality constraint using a dual variable $v \in \mathbb{R}^p$ and a penalty parameter $\rho > 0$:
\begin{equation} \label{eq:ADMM2}
\begin{aligned}
    \min _{b, \beta \in \mathbb{R}^p}  L_\rho(b,\beta,v)&= \frac{1}{2}\|y-X b\|_2^2+\sum_{i \in[p]} \psi_i\left(\beta_i ; \lambda_0, \lambda_2, M\right)\\
    & + \infty \cdot \mathbf{1}_{\left\{\|\beta\|_{\infty} \leq M\right\}}+ v^\top(b-\beta) +\frac{\rho}{2}\|b-\beta\|_2^2 
\end{aligned}
\end{equation}

The ADMM algorithm proceeds by iteratively minimizing the augmented Lagrangian with respect to primal variables \(b\) and \(\beta\), and dual variable \(v\). Specifically, at iteration \(t\), the updates are:
\begin{equation}\label{eq:admm-update}
    \begin{aligned}
        b^{(t+1)} &= \argmin_{b} L_\rho( b, \beta^{(t)}, v^{(t)})\\
        \beta^{(t+1)} &= \argmin\nolimits_{\beta} L_\rho( b^{(t+1)}, \beta, v^{(t)})  \\
       v^{(t+1)} &= v^{(t)} + \rho( b^{(t+1)}- \beta^{(t+1)}).
    \end{aligned}
\end{equation}

\subsubsection{Structuring the ADMM updates to exploit parallelism}
We describe how each subproblem can be solved in a separable fashion.

\noindent \textbf{Update for $b$.} To update of $b^{(t+1)}$ in \eqref{eq:admm-update}, we solve the optimization problem: \begin{align}\label{eq
} b^{(t+1)} & = \argmin_{b} \left\{ \frac{1}{2}\| y - Xb \|_2^2 + v^{(t)\top}(b - \beta^{(t)}) + \frac{\rho}{2} \| b - \beta^{(t)} \|_2^2 \right\} \\
& = \left(X^\top X+\rho I\right)^{-1}\left( X^\top y + \rho \beta^{(t)} -v^{(t)} \right).
\end{align} 
At first glance this update appears to couple the coordinates of $b$. However, notice that we can precompute the matrix $D = \left( X^\top X + \rho I \right)^{-1}$ and the vector $c = X^\top y$ before the ADMM iterations begin. When $p$ is much larger than $n$ (e.g. $p \sim 10^5$, $n \sim 10^3$), inverting this matrix can be expensive. However, we can exploit the equivalent reformulation of the inverse:
\[
D = 
\begin{cases}
\left( X^\top X + \rho I_p \right)^{-1} & \text{if } n \geq p, \\
\frac{1}{\rho} I_p - \frac{1}{\rho^2} X^\top \left( X X^\top + \rho I_n \right)^{-1} X & \text{if } p > n.
\end{cases}
\]

When \( X^\top X \) is a large matrix of size \( p \times p \), it is more efficient to compute $D$ to use the Sherman-Morrison-Woodbury formula~\cite{hager1989}, as it only involves inverting an $n\times n$ matrix (which can be much smaller than $p$ for sparse learning). Moreover, efficient matrix factorization routines reduce the inversion problem to solving two triangular systems via forward and backward substitution, each with $\mathcal{O}(n^2)$ complexity. Importantly, this precomputation of \(D\) only needs to be done once for the \textit{entire tree} in BnB. Once $D$ and $c$ are precomputed, the per-iteration computation of $b^{(t+1)}$ reduces to operations involving vectors, and the update can be parallelized. Specifically, for each coordinate $i$:
\begin{equation}\label{eq:b_update} b_i^{(t+1)} = \sum_{j=1}^p D_{ij} \left( c_j + \rho \beta_j^{(t)} - v_j^{(t)} \right), \quad \text{for } i = 1, \dotsc, p \end{equation}
hence, each coordinate $b_i$ can be assigned a unique thread and computed independently across the coordinates. 

\noindent \textbf{Update for $\beta$.} The update for $\beta^{(t+1)}$ in \eqref{eq:admm-update}
is inherently separable and can be computed in closed form for each coordinate $i\in [p]$ as:

\begin{equation}\label{eq:minbetalower}
    \begin{aligned}
    \beta_i^{(t+1)}  &  = \argmin_{\beta_i} \frac{\rho}{2} (\beta_i - \tilde{\beta_i})^2 + \psi_i\left(\beta_i ; \lambda_0, \lambda_2, M\right) + \infty \cdot \mathbf{1}_{\left\{|\beta_i| \leq M\right\}} \\
        & = \begin{cases} 0 & \text { if } i \in \mathcal{F}_0\\
T\left(\frac{\rho}{\rho+2\lambda_2}\tilde{\beta_i},0,M\right) & \text { else if } i \in \mathcal{F}_1 \text{ or }  \frac{2\sqrt{\lambda_0 \lambda_2}}{\rho}+\sqrt{\frac{\lambda_0}{\lambda_2}} \le |\tilde{\beta}_i| \\
T\left(\tilde{\beta}_i ; \frac{2\sqrt{\lambda_0 \lambda_2}}{\rho}, M\right) & \text { else if }|\tilde{\beta}_i| \leq \frac{2\sqrt{\lambda_0 \lambda_2}}{\rho}+\sqrt{\frac{\lambda_0}{\lambda_2}}, \sqrt{\frac{\lambda_0}{\lambda_2}}\le M\\
T\left(\tilde{\beta}_i ; \frac{\lambda_0}{M\rho}+\frac{\lambda_2 M}{\rho}, M\right) & \text { else if } \sqrt{\frac{\lambda_0}{\lambda_2}}>M
\end{cases}
\end{aligned}
\end{equation}
where $\tilde{\beta_i}=b_i + v_i / \rho$ and $T:\mathbb{R}\to\mathbb{R}$ is the box-constrained soft-thresholding operator:
\begin{equation}\label{eq:Tdef}
    \begin{aligned}
T(t ; a, m):= \begin{cases}0 & \text { if }|t| \leq a \\ (|t|-a) \operatorname{sign}(t) & \text {if } a<|t| \leq a+m \\ m \operatorname{sign}(t) & \text { otherwise. }\end{cases}
    \end{aligned}
\end{equation}

The solution in (\ref{eq:minbetalower}) comes from reasoning about the different cases of the piece-wise function in (\ref{eq:psi}) -- see Appendix \ref{sect:app-technical}. The coordinate is either zero (if $i$ belongs to the upper bound set $\mathcal{F}_0$) or is the result of a thresholding operation. These thresholding operations can be batched efficiently across the coordinates in popular libraries for GPU-accelerated computation, such as PyTorch. See Algorithm \ref{alg:beta-update-pytorch} for an example of how the $\beta$ update can be performed in PyTorch.

\begin{algorithm}[t]
\caption{PyTorch code for \(\beta\)-update}
\label{alg:beta-update-pytorch}
{\footnotesize
\begin{lstlisting}[style=Pytorch, mathescape=true, aboveskip=2pt, belowskip=2pt]
def $\beta_{\text{update}}$($\tilde{\beta}$, $\lambda_0$, $\lambda_2$, $\rho$, $M$, $\boldsymbol{a}$, $\mathcal{F}_0$, $\mathcal{F}_1$):
    return torch.where(
        $\mathcal{F}_0$, torch.zeros_like($\tilde{\beta}$),
        torch.where(
            $\mathcal{F}_1$,
            (($\rho$/($\rho$+2*$\lambda_2$))*$\tilde{\beta}$).clamp(-$M$, $M$),
            (torch.sign($\tilde{\beta}$) * (torch.abs($\tilde{\beta}$) - $\boldsymbol{a}$).clamp(min=0.0)).clamp(-$M$, $M$)
        )
    )
\end{lstlisting}
}
\end{algorithm}

\smallskip 


\noindent \textbf{Update for $v$}. Finally, we can also see that the update for $v$ in \eqref{eq:admm-update} is separable across the coordinates because each component \(v_i\) depends only on the corresponding components \(b_i\) and \(\beta_i\):
\begin{equation}\label{eq:v_i-update}
v_i^{(t+1)} = v_i^{(t)} + \rho \left( b_i^{(t+1)} - \beta_i^{(t+1)} \right), \quad \text{for } i = 1, \dotsc, p
\end{equation}
where, similar to the update for $b$, each coordinate of $v$ can be assigned a unique thread and computed in parallel. Overall, these updates align well with the Same Instruction Multiple Data (SIMD) programming model of GPUs, where many parallel elements are processed independently using the same program \cite{Owens2008}. \autoref{fig:parallel_comp} illustrates the ADMM updates used in our framework. As shown in the figure, ADMM allows for operator splitting by separating the optimization problem into sequential updates of primal variables \( b \), auxiliary variables \( \beta \), and dual variables \( v \). These updates can be computed in closed-form and decomposed across the elements of each vector for fast parallel computation--see \autoref{fig:parallel_comp} (right). 

 \vspace{-1mm}
 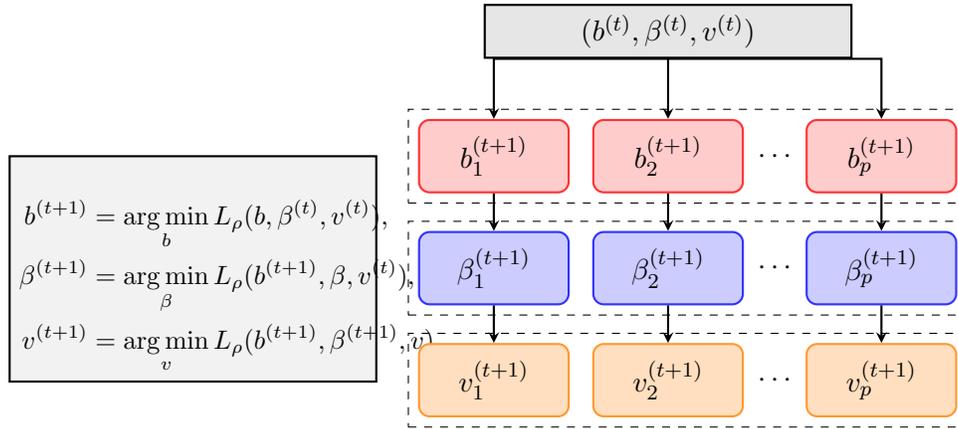
\begin{figure}[H]
\centering
\resizebox{0.88\linewidth}{!}{%
\begin{tikzpicture}[
    scale=1.0,
    node distance=1.5cm,
    auto,
    >=stealth,
    bnode/.style = {draw, rectangle, thick, fill=red!20, text width=4.5em, text centered, rounded corners, minimum height=2.5em, draw=red!80},
    betanode/.style = {draw, rectangle, thick, fill=blue!20, text width=4.5em, text centered, rounded corners, minimum height=2.5em, draw=blue!80},
    vnode/.style = {draw, rectangle, thick, fill=orange!25, text width=4.5em, text centered, rounded corners, minimum height=2.5em, draw=orange!80},
    line/.style = {thick, ->},
    dashed_line/.style = {thick, dashed, ->},
    branch/.style={circle, fill=black, minimum size=4pt, inner sep=0pt, outer sep=0pt}
    ]

  \node [rectangle, draw, thick, fill=gray!10, text width=12em, text width=15em, align=left] (equations) at (-4, 0.5) {
  \small
    \begin{align*}
        b^{(t+1)} &= \argmin_{b} L_\rho( b, \beta^{(t)}, v^{(t)}), \\
        \beta^{(t+1)} &= \argmin_{\beta} L_\rho( b^{(t+1)}, \beta, v^{(t)}),  \\
        v^{(t+1)} &= v^{(t)} + \rho\bigl(b^{(t+1)}-\beta^{(t+1)}\bigr)
    \end{align*}
  };

\node [bnode] (b1) at (0.7, 2) {\(b_1^{(t+1)}\)};
  \node [bnode, right=0.3cm of b1] (b2) {\(b_2^{(t+1)}\)};
  \node [right=0.05cm of b2] (bdots) {\(\cdots\)};
  \node [bnode, right=0.05cm of bdots] (bp) {\(b_p^{(t+1)}\)};

  \node [betanode, below=0.5cm of b1] (beta1) {\(\beta_1^{(t+1)}\)};
  \node [betanode, below=0.5cm of b2] (beta2) {\(\beta_2^{(t+1)}\)};
  \node [right=0.05cm of beta2] (betadots) {\(\cdots\)};
  \node [betanode, below=0.5cm of bp] (betap) {\(\beta_p^{(t+1)}\)};

  \node [vnode, below=0.5cm of beta1] (v1) {\(v_1^{(t+1)}\)};
  \node [vnode, below=0.5cm of beta2] (v2) {\(v_2^{(t+1)}\)};
  \node [right=0.05cm of v2] (vdots) {\(\cdots\)};
  \node [vnode, below=0.5cm of betap] (vp) {\(v_p^{(t+1)}\)};

  \draw [line] (beta1.south) -- (v1.north);
  \draw [line] (beta2.south) -- (v2.north);
  \draw [line] (betap.south) -- (vp.north);

  \draw [line] (b1.south) -- (beta1.north);
  \draw [line] (b2.south) -- (beta2.north);
  \draw [line] (bp.south) -- (betap.north);

  \node [rectangle, draw, thick, fill=gray!20, text width=12em, align=center, above=0.8cm of b2] (bupdate) {($b^{(t)}, \beta^{(t)}, v^{(t)}$)};

  \draw [line] (bupdate.south) -- (b2.north);
  \draw [line] (bupdate.south) -| (b1.north);
  \draw [line] (bupdate.south) -| (bp.north);

  \node [dashed, draw, fit=(b1)(b2)(bp), label=above:{}] {};
  \node [dashed, draw, fit=(beta1)(beta2)(betap), label=above:{}] {};
  \node [dashed, draw, fit=(v1)(v2)(vp), label=above:{}] {};

\end{tikzpicture}%
}
\caption{Illustration of the ADMM updates. \textbf{Left:} ADMM allows for operator splitting by separating the optimization problem into sequential updates of primal variables \(b\), auxiliary variables \(\beta\), and dual variables \(v\). Each variable is updated by minimizing the augmented Lagrangian \(L_\rho\) with respect to that variable while keeping the others fixed. \textbf{Right:} Each component \(b_i^{(t+1)}\), \(\beta_i^{(t+1)}\), \(v_i^{(t+1)}\) for \(i = 1, \dots, p\) is updated independently across the decision variables, which enables fast parallel updates on GPUs.}
\label{fig:parallel_comp}
\end{figure}

\subsubsection{Dual Bounds} Our ADMM-based lower bound algorithm is iterative and produces inexact primal solutions to problem \eqref{eq:ADMM1} in practice, which may not provide a valid lower bound. To ensure that our BnB framework prunes the search space correctly, we need the dual bound of problem \eqref{eq:ADMM1}. We first present a dual of problem \eqref{eq:ADMM1} and its optimality conditions in the following proposition.
\begin{proposition}\label{thm:dual}
We define the function $h:\mathbb{R}_{\ge 0}\to\mathbb{R}$ as
\begin{equation}\label{eq:hdef}
h(x)=
\begin{cases}
x^2/(4\lambda_2)-\lambda_0, & \text{if } x\le 2M\lambda_2,\\
Mx-\lambda_0-\lambda_2M^2, & \text{otherwise.}
\end{cases}
\end{equation}
A dual optimization form for problem \eqref{eq:ADMM1} is
\begin{equation}\label{eq:dual}
\max_{r\in\mathbb{R}^n}\;
-\frac12\|r\|_2^2+y^\top r-\sum_{i\in[p]}\nu_i\!\left(|X_i^\top r|\right),
\end{equation}
where
\begin{equation}\label{eq:nudef}
\nu_i(x)=
\begin{cases}
0, & \text{if } i\in \mathcal{F}_0,\\
h(x), & \text{if } i\in \mathcal{F}_1,\\
[h(x)]_+, & \text{if } i\notin \mathcal{F}_0\cup\mathcal{F}_1 \text{ and } \sqrt{\frac{\lambda_0}{\lambda_2}}\le M,\\
[Mx-\lambda_0-\lambda_2M^2]_+, & \text{if } i\notin \mathcal{F}_0\cup\mathcal{F}_1 \text{ and } \sqrt{\frac{\lambda_0}{\lambda_2}}> M.
\end{cases}
\end{equation}
Let $p^\star$ and $d^\star$ denote the optimal values of \eqref{eq:ADMM1} and \eqref{eq:dual}, respectively. Then the following hold:
\begin{enumerate}
    \item For every $r\in\mathbb{R}^n$,
    \[
    -\frac12\|r\|_2^2+y^\top r-\sum_{i\in[p]}\nu_i\!\left(|X_i^\top r|\right)\le p^\star,
    \]
which yields a valid lower bound on the optimal value of \eqref{eq:ADMM1}.

    \item Strong duality holds, i.e.
    \[
    d^\star = p^\star.
    \]

    \item If $(b^\star,\beta^\star)$ is an optimal primal solution to \eqref{eq:ADMM1}, then
    \[
    r^\star := y-Xb^\star
    \]
    is an optimal dual solution to \eqref{eq:dual}.
\end{enumerate}
\end{proposition}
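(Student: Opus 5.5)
We plan to derive the dual by Fenchel duality applied to the equivalent unconstrained form of \eqref{eq:ADMM1}. After eliminating $b=\beta$, \eqref{eq:ADMM1} is $\min_\beta f(X\beta)+g(\beta)$ with $f(w)=\frac12\|y-w\|_2^2$ and $g(\beta)=\sum_i g_i(\beta_i)$, where $g_i(\beta_i)=\psi_i(\beta_i;\lambda_0,\lambda_2,M)+\infty\cdot\mathbf 1_{\{|\beta_i|>M\}}$. We first introduce $w=X\beta$ with a multiplier $r\in\mathbb{R}^n$. The Lagrangian is $\frac12\|y-w\|^2+g(\beta)+r^\top(w-X\beta)$. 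Minimizing over $w$ gives $w=y-r$ and the value $-\frac12\|r\|^2+y^\top r$. Minimizing over $\beta$ separates by coordinate and gives $-\sum_i g_i^*(X_i^\top r)$. The dual function is therefore $-\frac12\|r\|^2+y^\top r-\sum_i g_i^*(X_i^\top r)$. Part 1 then follows from weak duality: for any $r$ and any feasible $\beta$, the Fenchel--Young inequalities $f(X\beta)\ge r^\top X\beta-f^*(r)$ and $g_i(\beta_i)\ge -(X_i^\top r)\beta_i-g_i^*(-X_i^\top r)$ bound the objective from below. Since each $g_i$ is even, $g_i^*$ is even as well, so the dual depends only on $|X_i^\top r|$.

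The main computational step is to identify $g_i^*(x)=\sup_{|\beta|\le M}\{x\beta-g_i(\beta)\}$ with $\nu_i(|x|)$. We treat each case of \eqref{eq:psi} separately, taking $x\ge0$.
\begin{itemize}
\item If $i\in\mathcal F_0$, then $\beta=0$ is forced and the conjugate is $0$.
\item If $i\in\mathcal F_1$, we maximize the concave quadratic $x\beta-\lambda_0-\lambda_2\beta^2$ over $[0,M]$. The unconstrained maximizer is $\beta=x/(2\lambda_2)$, which is feasible exactly when $x\le 2M\lambda_2$ and gives $x^2/(4\lambda_2)-\lambda_0$. Otherwise the maximizer is $\beta=M$, giving $Mx-\lambda_0-\lambda_2M^2$. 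This is exactly $h(x)$.
\item If $i$ is free and $\sqrt{\lambda_0/\lambda_2}\le M$, we use that $\psi_i$ is the convex envelope of $\lambda_0\mathbf 1_{\beta\neq0}+\lambda_2\beta^2$ on $[-M,M]$. The conjugate of an envelope equals the conjugate of the original function, so it equals $\max\{0,\,h(x)\}=[h(x)]_+$, where the $0$ comes from choosing $\beta=0$. The same value can be checked directly from the piecewise formula: the linear piece gives $0$ when $x\le2\sqrt{\lambda_0\lambda_2}$, and the quadratic piece gives $h(x)$ beyond that point, where $h\ge0$.
\item If $\sqrt{\lambda_0/\lambda_2}>M$, the function is linear with slope $\lambda_0/M+\lambda_2M$ on $[-M,M]$. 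Its conjugate is $[M x-\lambda_0-\lambda_2M^2]_+$.
\end{itemize}

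For Part 2 we invoke Fenchel--Rockafellar strong duality. The function $f$ is finite and convex on all of $\mathbb{R}^n$, so $X\,\mathrm{dom}\, g$ trivially meets the interior of $\mathrm{dom} f$. The function $g$ is proper, closed and convex: each $\psi_i$ restricted to $[-M,M]$ is convex and continuous, which we verify by checking that slopes match at the breakpoints. These conditions give zero duality gap. The primal optimum is attained because the feasible set is compact, and the dual optimum is attained because the dual objective is strongly concave.

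For Part 3, primal attainment together with strong duality yields the optimality conditions. The condition for $f$ states that $r^\star$ must equal $\nabla f$ at $w=Xb^\star$ up to sign, that is, $r^\star=y-Xb^\star$. Since $(b^\star,\beta^\star)$ is optimal and the dual objective is strongly concave, $r^\star$ is the unique dual maximizer. Equivalently, substituting $r^\star$ makes both Fenchel--Young inequalities tight. We expect the case analysis of the conjugate, in particular the free case with $[h]_+$, to be the main obstacle.
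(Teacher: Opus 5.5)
Your proof is correct, but it reaches the dual by a different route than the paper.

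\textbf{What the paper does.} It keeps the splitting of \eqref{eq:ADMM1} and dualizes the constraint $b=\beta$ with a multiplier $v\in\mathbb{R}^p$. It then shows the inner minimization over $b$ is unbounded unless $v\in\operatorname{row}(X)$, and substitutes $v=X^\top r$. After that it carries out the same four-case coordinatewise minimization you do. It cites Slater for strong duality and reads Part 3 off the stationarity condition of the $b$-subproblem.

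\textbf{What you do instead.} You eliminate $b$, dualize $w=X\beta$, and apply Fenchel--Rockafellar. This buys several things.
\begin{enumerate}
\item The $n$-dimensional dual variable appears directly, without the row-space reduction.
\item The dual function is computed exactly. When $\operatorname{range}(X)\neq\mathbb{R}^n$ (e.g.\ $n>p$), the paper's value $-\frac12\|r\|_2^2+y^\top r$ for the $b$-minimum is exact only if $y-r\in\operatorname{range}(X)$, and otherwise underestimates it. This is harmless for Part 1 and for $d^\star$, but the paper glosses over it.
\item You get dual attainment and an actual proof of Part 3, via the extremality condition $-r^\star=\nabla f(X\beta^\star)$. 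The paper leaves this essentially implicit.
\item Your convex-envelope viewpoint justifies the $[h]_+$ case more cleanly than the paper's comparison of the two pieces. It also covers $\sqrt{\lambda_0/\lambda_2}>M$, where the envelope is the chord $(\lambda_0/M+\lambda_2M)|\beta|$. In that regime $[h(x)]_+=[Mx-\lambda_0-\lambda_2M^2]_+$, so the last two lines of \eqref{eq:nudef} could be merged.
\end{enumerate}

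\textbf{What the paper's route buys.} It gives a direct link to the algorithm. The multiplier of $b=\beta$ is exactly the ADMM dual variable, and at a fixed point of \eqref{eq:admm-update} one has $v=X^\top(y-Xb)$. This is what motivates taking $\hat r=y-X\hat b$ from inexact iterates.

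\textbf{Two small slips.}
\begin{enumerate}
\item Your Fenchel--Young pair in Part 1 actually yields the dual objective at $-r$, not at $r$. Pair $f$ with $-r$ and $g$ with $X^\top r$, or simply note that $r$ is arbitrary.
\item For $i\in\mathcal{F}_0$, $\psi_i$ is the indicator of $\{0\}$. It is closed and convex but not continuous on $[-M,M]$, so your continuity claim is wrong there, though closedness and convexity are all Fenchel--Rockafellar needs.
\end{enumerate}
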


Motivated by the optimality condition provided in Proposition \ref{thm:dual}, given an inexact primal solution $(\hat b,\hat \beta)$ obtained from ADMM, we set $\hat r=y-X\hat b$ as the approximate dual solution. We also note that similar to the update of $\beta$, the computation of the dual bound can exploit GPU acceleration. 

\subsubsection{ADMM Warm Starting.}
For the root node, we initialize the ADMM variables ($\beta,b$ and $v$) to zero. For child nodes, we warm-start from the parent solution, modifying the iterates to respect the newly fixed coordinates in $\mathcal{F}_0$ and $\mathcal{F}_1$ (e.g., setting $\beta_i=0$ for $i\in\mathcal{F}_0$), and then updating $b$ and $v$ via \eqref{eq:b_update} and \eqref{eq:v_i-update}. Since a child differs from its parent only by a small number of additional fixings, this typically serves as a good initialization and substantially reduces the number of ADMM iterations per node. This is a key advantage of ADMM as a first-order method in branch-and-bound. The variables $(b,\beta,v)$ transfer directly across related nodes, and the matrix $D$ is reused across the entire tree. By contrast, interior-point methods are generally less effective at exploiting warm-starts when constraints change, so they tend to require more work per node even when successive subproblems are very similar \cite{pacaud2023}.

\subsection{Parallelizing the Lower Bound Algorithm Across the BnB Tree}\label{subsect:ADMM_parallel}

In solving mixed-integer optimization problems via BnB algorithms with many binary variables, such as (\ref{eq:perspective}), a key challenge is efficiently traversing the many subproblems that arise via branching. Each node in the BnB tree represents a subproblem defined by specific variable fixations, and as the tree expands, the number of open nodes (i.e., subproblems yet to be solved) can grow exponentially. Here we discuss how we leverage the structure of our problem to parallelize computations across several open nodes in the search tree.

Consider \( K \) subproblems corresponding to different open nodes in the BnB tree. Each subproblem $k \in [K]$ is characterized by its own fixed sets \( \mathcal{F}_0^{(k)} \), \( \mathcal{F}_1^{(k)} \),  representing variables fixed to zero or one, respectively. We seek to solve these subproblems simultaneously by batching their ADMM updates. For parallel computation, we represent the variables across all subproblems using batched matrices:

\begin{itemize}
    \item \( B \in \mathbb{R}^{K \times p} \): Batch of primal variables \( \{b^{(k)}\}_{k=1}^K \)
    \item \( \boldsymbol{\beta} \in \mathbb{R}^{K \times p} \): Batch of auxiliary variables \{\( \beta^{(k)}\}_{k=1}^K \)
    \item \( V \in \mathbb{R}^{K \times p} \): Batch of dual variables \( \{v^{(k)}\}_{k=1}^K \)
\end{itemize}

Each row \( k \in [K] \) in these matrices corresponds to the variables of subproblem \( k \), allowing us to perform operations across all subproblems simultaneously, as depicted in \autoref{fig:parallel_comp}

\noindent \textbf{Update for $B$.} The primal update for all subproblems can be computed as
\begin{equation}\label{eq:batched_b_update}
    B^{(t+1)} = \big( C + \rho \boldsymbol{\beta}^{(t)} - V^{(t)} \big) D,
\end{equation}
where $D = \left( X^\top X + \rho I_p \right)^{-1} \in \mathbb{R}^{p\times p}$ is precomputed and $C := \mathbf{1}_K (X^\top y)^\top \in \mathbb{R}^{K\times p}$
is the matrix whose $k$-th row equals $(X^\top y)^\top$, with $\mathbf{1}_K \in \mathbb{R}^K$ being a $K$-dimensional vector with 1 at every entry. In other words, $C$ replicates the vector 
$X^\top y$ across the $K$ subproblems. For efficient storage, we implement a shared memory reference such that we only store the vector $X^\top y$ itself (without duplicate copies).

\noindent \textbf{Update for $\boldsymbol{\beta}$.} The update for \( \boldsymbol{\beta} \) applies the proximal operator \( T \) element-wise:
\begin{equation}\label{eq:batched_beta_update}
    \boldsymbol{\beta}^{(t+1)}_{k,i} = T\left( \tilde{\boldsymbol{\beta}}_{k,i}; a_{k,i}, M \right),
\end{equation}
where \( \tilde{\boldsymbol{\beta}} = B^{(t+1)} + {V^{(t)}}/{\rho} \), and \( a_{k,i} \) and \( m_{k,i} \) are parameters that depend on the fixations in \( \mathcal{F}_0^{(k)} \) and \( \mathcal{F}_1^{(k)} \) for each subproblem \( k \). To efficiently implement this update on GPUs, we create masks for each subproblem \( k \) and coordinate \( i \):
\[
\delta_{k,i}^{(0)} = \begin{cases}
1, & \text{if } i \in \mathcal{F}_0^{(k)}, \\
0, & \text{otherwise};
\end{cases}
\quad
\delta_{k,i}^{(1)} = \begin{cases}
1, & \text{if } i \in \mathcal{F}_1^{(k)}, \\
0, & \text{otherwise};
\end{cases}
\quad
\delta_{k,i}^{(f)} = 1 - \delta_{k,i}^{(0)} - \delta_{k,i}^{(1)}.
\]
Using these masks, we express the auxiliary variable update for all subproblems and coordinates in vectorized form as:
\begin{equation}\label{eq:beta_update_vector}
\boldsymbol{\beta}^{(t+1)} = \boldsymbol{\delta}^{(0)} \circ \mathbf{0} + \boldsymbol{\delta}^{(1)} \circ \boldsymbol{\beta}^{(t+1)}_{\text{fixed}} + \boldsymbol{\delta}^{(f)} \circ \boldsymbol{\beta}^{(t+1)}_{\text{free}},
\end{equation}
where ``\( \circ \)'' denotes element-wise multiplication; \( \boldsymbol{\beta}^{(t+1)}_{\text{fixed}} \in \mathbb{R}^{K \times p} \) is the update for variables fixed to one, computed as:
  \[
\boldsymbol{\beta}^{(t+1)}_{\text{fixed}} = \operatorname{proj}_{[-M, M]} \left( \frac{\rho}{\rho + 2 \lambda_2} \tilde{\boldsymbol{\beta}} \right);
  \]
and, \( \boldsymbol{\beta}^{(t+1)}_{\text{free}} \in \mathbb{R}^{K \times p} \) is the update for free variables, computed using operator \( T \):
  \[
\beta^{(t+1)}_{\text{free}, k,i} = \begin{cases}
T\left( \tilde{\beta}_{k,i}; \dfrac{2 \sqrt{\lambda_0 \lambda_2}}{\rho}, M \right), & \text{if } \sqrt{\dfrac{\lambda_0}{\lambda_2}} \leq M, \\
T\left( \tilde{\beta}_{k,i}; \dfrac{\lambda_0}{M \rho} + \dfrac{\lambda_2 M}{\rho}, M \right), & \text{if } \sqrt{\dfrac{\lambda_0}{\lambda_2}} > M.
\end{cases}
\]

The use of binary masks to handle variable fixations is very well-suited for GPU acceleration. Rather than explicitly maintaining different sets of active variables for each subproblem, the masks allow us to compute updates for all variables and then efficiently zero out or modify values based on their fixed status. This approach avoids conditional branches in the GPU code, which could otherwise lead to thread divergence and reduce performance.

\noindent \textbf{Update for $V$.} The dual variables \( V \) are updated simultaneously for all subproblems, given by the matix notation:
\begin{equation}\label{eq:batched_v_update}
    V^{(t+1)} = V^{(t)} + \rho \left( B^{(t+1)} - \boldsymbol{\beta}^{(t+1)} \right).
\end{equation}

This batched formulation in equations (\ref{eq:batched_b_update})-(\ref{eq:batched_v_update}) offers several key advantages. First, it allows us to leverage the high throughput of modern GPUs by processing multiple subproblems simultaneously using common instructions (see Figure \ref{fig:SIMD}). The matrix operations  can be efficiently computed using GPU-optimized linear algebra libraries. Second, all subproblems share the same data matrix $X$ at the matrix $D$, to avoid memory consumption becoming a bottleneck on GPUs. As discussed in \autoref{sec:node_parallel}, this batched formulation can process far more nodes in parallel and drastically reduces the time to close the optimality gap.

\begin{figure}[H]
\centering
\begin{tikzpicture}[
  auto matrix/.style={
    matrix of nodes,
    draw, thick, inner sep=0pt,
    nodes in empty cells, column sep=-0.2pt, row sep=-0.2pt,
    cells={
      nodes={
        minimum width=1.9em, minimum height=1.9em,
        draw, very thin, anchor=center,
        execute at begin node={%
          $\vphantom{x_|}%
          \ifnum\the\pgfmatrixcurrentrow<4
            \ifnum\the\pgfmatrixcurrentcolumn<4
              \varSymbol{#1}^{\the\pgfmatrixcurrentrow}_{\the\pgfmatrixcurrentcolumn}%
            \else 
              \ifnum\the\pgfmatrixcurrentcolumn=5
                \varSymbol{#1}^{\the\pgfmatrixcurrentrow}_{K}%
              \fi
            \fi
          \else
            \ifnum\the\pgfmatrixcurrentrow=5
              \ifnum\the\pgfmatrixcurrentcolumn<4
                \varSymbol{#1}^{T}_{\the\pgfmatrixcurrentcolumn}%
              \else
                \ifnum\the\pgfmatrixcurrentcolumn=5
                  \varSymbol{#1}^{T}_{K}%
                \fi 
              \fi
            \fi
          \fi  
          \ifnum\the\pgfmatrixcurrentrow\the\pgfmatrixcurrentcolumn=14
            \cdots
          \fi
          \ifnum\the\pgfmatrixcurrentrow\the\pgfmatrixcurrentcolumn=41
            \vdots
          \fi
          \ifnum\the\pgfmatrixcurrentrow\the\pgfmatrixcurrentcolumn=44
            \ddots
          \fi$
        }
      }
    }
  }
]

\newcommand{\varSymbol}[1]{%
  \ifthenelse{\equal{#1}{beta}}{\beta}{#1}%
}

\matrix[auto matrix=v, xshift=3em, yshift=3em, cells={nodes={fill=orange!25}}] (matv) {
  & & & & \\
  & & & & \\
  & & & & \\
  & & & & \\
  & & & & \\
};

\matrix[auto matrix=beta, xshift=1.5em, yshift=1.5em, cells={nodes={fill=blue!25}}] (matbeta) {
  & & & & \\
  & & & & \\
  & & & & \\
  & & & & \\
  & & & & \\
};

\matrix[auto matrix=b, cells={nodes={fill=red!25}}] (matb) {
  & & & & \\
  & & & & \\
  & & & & \\
  & & & & \\
  & & & & \\
};

\draw[thick, -stealth] ([xshift=1ex]matb.south east) -- ([xshift=1ex]matv.south east)
  node[midway, below, xshift=1.5em] {Variables};
\draw[thick, -stealth] ([yshift=-1ex]matb.south west) -- 
  ([yshift=-1ex]matb.south east) node[midway, below] {Subproblems};
\draw[thick, -stealth] ([xshift=-1ex]matb.north west)
  -- ([xshift=-1ex]matb.south west) node[midway, above, rotate=90] { Iterations};
\end{tikzpicture}
\caption{SIMD model for parallel lower bound algorithm}
\label{fig:SIMD}
\end{figure}

Our shared memory reference scheme exploits a key observation: in the branch-and-bound tree, all subproblems operate on the same underlying data matrices ($X$, $D$ and $y$), with differences only in the variables $b, \beta$, and $v$. Instead of naively replicating these matrices for each subproblem, which would quickly exhaust GPU memory, we maintain a single global reference to these common data. Each subproblem stores only its unique decision variables.

\subsection{Upper bound algorithm}\label{sec: upper_bound}
During the BnB algorithm execution, whenever we find an integral $z$-solution, it provides an upper bound for Problem~\eqref{eq:perspective}. Upper bounds (aka primal solutions) are critical in BnB algorithms for efficiently pruning the search space and avoiding exhaustive enumeration. We discuss how to efficiently compute a good solution for problem \eqref{eq:perspective} within our BnB framework. Given a solution $(\hat \beta,\hat z,\hat s)$ of the relaxation problem \eqref{eq:relaxnode}, we round each component of $\hat z$ to get an integral solution $z^*$. With fixed $z=z^*$, problem \eqref{eq:perspective} reduces to a quadratic problem with box constraint:
\begingroup
\setlength{\abovedisplayskip}{4pt}
\setlength{\belowdisplayskip}{4pt}
\setlength{\abovedisplayshortskip}{2pt}
\setlength{\belowdisplayshortskip}{2pt}
\begin{equation}\label{eq:upperboundbeta}
\begin{aligned}
\min_{\beta_\mathcal{S}} \quad & U_\mathcal{S}(\beta_\mathcal{S})
:=\frac{1}{2}\|y-X_\mathcal{S}\beta_\mathcal{S}\|_2^2+\lambda_2\|\beta_\mathcal{S}\|_2^2 \quad 
\text{s.t.} \quad |\beta_i|\le M,\; i\in\mathcal{S}
\end{aligned}
\end{equation}
\endgroup
where $X_\mathcal{S}$ denotes the submatrix of $X$ with columns in $\mathcal{S}=\{i\mid z_i=1\}$. We solve \eqref{eq:upperboundbeta} using a fast proximal gradient method with Nesterov acceleration and backtracking line search~\cite{beck2009fast}. At iteration $t$, given the current and previous iterates $\beta_{\mathcal{S}}^{(t)}$ and $\beta_{\mathcal{S}}^{(t-1)}$, we first form an extrapolated point
\begingroup
\setlength{\abovedisplayskip}{3pt}
\setlength{\belowdisplayskip}{3pt}
\setlength{\abovedisplayshortskip}{2pt}
\setlength{\belowdisplayshortskip}{2pt}
\begin{equation}\label{eq:fpg_extrapolate}
\begin{aligned}[t]
\widetilde{\beta}_{\mathcal{S}}^{(t)}
&=
\beta_{\mathcal{S}}^{(t)}
+ \frac{t}{t+3}\bigl(\beta_{\mathcal{S}}^{(t)} - \beta_{\mathcal{S}}^{(t-1)}\bigr),
\end{aligned}
\end{equation}
\endgroup
then compute the gradient of the smooth part:
\begingroup
\setlength{\abovedisplayskip}{3pt}
\setlength{\belowdisplayskip}{3pt}
\setlength{\abovedisplayshortskip}{2pt}
\setlength{\belowdisplayshortskip}{2pt}
\begin{equation}\label{eq:fpg_grad}
\begin{aligned}[t]
\nabla g\bigl(\widetilde{\beta}_{\mathcal{S}}^{(t)}\bigr)
&=
X_{\mathcal{S}}^\top\bigl(X_{\mathcal{S}}\,\widetilde{\beta}_{\mathcal{S}}^{(t)} - y\bigr)
+ 2\lambda_2\,\widetilde{\beta}_{\mathcal{S}}^{(t)},
\end{aligned}
\end{equation}
\endgroup
and form a projected gradient step with stepsize $\alpha^{(t)}$:
\begingroup
\setlength{\abovedisplayskip}{3pt}
\setlength{\belowdisplayskip}{3pt}
\setlength{\abovedisplayshortskip}{2pt}
\setlength{\belowdisplayshortskip}{2pt}
\begin{equation}\label{eq:fpg_step}
\begin{aligned}[t]
\beta_{\mathcal{S}}^{(t+1)}
&=
\Pi_{[-M,M]}\Bigl(
\widetilde{\beta}_{\mathcal{S}}^{(t)}
- \alpha^{(t)} \nabla g\bigl(\widetilde{\beta}_{\mathcal{S}}^{(t)}\bigr)
\Bigr),
\end{aligned}
\end{equation}
\endgroup
where $\Pi_{[-M,M]}$ denotes the elementwise projection onto $[-M,M]$. The stepsize $\alpha^{(t)}$ is chosen via a standard backtracking line search: we shrink $\alpha^{(t)}$ until the Armijo condition:
\begingroup
\setlength{\abovedisplayskip}{4pt}
\setlength{\belowdisplayskip}{4pt}
\setlength{\abovedisplayshortskip}{2pt}
\setlength{\belowdisplayshortskip}{2pt}
\begin{equation}\label{eq:fpg_armijo}
\begin{aligned}[t]
U_\mathcal{S}\bigl(\beta_{\mathcal{S}}^{(t+1)}\bigr)
&\le
U_\mathcal{S}\bigl(\widetilde{\beta}_{\mathcal{S}}^{(t)}\bigr)
+ \bigl\langle \nabla g\bigl(\widetilde{\beta}_{\mathcal{S}}^{(t)}\bigr),
\beta_{\mathcal{S}}^{(t+1)} - \widetilde{\beta}_{\mathcal{S}}^{(t)} \bigr\rangle
+ \frac{1}{2\alpha^{(t)}} \bigl\|\beta_{\mathcal{S}}^{(t+1)} - \widetilde{\beta}_{\mathcal{S}}^{(t)}\bigr\|_2^2
\end{aligned}
\end{equation}
\endgroup
is satisfied. The procedure is terminated when the decrease in $U_\mathcal{S}$ or the change in $\beta_{\mathcal{S}}$ falls below a prescribed tolerance, at which point the resulting $\beta_{\mathcal{S}}$ provides a feasible upper bound for the original problem.

\subsection{Parallelizing the upper bound algorithm across the BnB tree}\label{sec:upper_bound_parallel}

To efficiently handle many candidate supports in parallel, we implement a vectorized version of the fast proximal gradient scheme in \eqref{eq:fpg_extrapolate}--\eqref{eq:fpg_armijo} that, similar to the algorithm presented in \autoref{subsect:ADMM_parallel}, can handle $K$ problems in parallel. Instead of solving \eqref{eq:upperboundbeta} separately for each support, we stack all coefficient vectors into a single matrix $\mathbf{B} \in \mathbb{R}^{K \times p}$, where row $k$ corresponds to subproblem $k$, and encode the supports via a binary mask $\mathbf{M} \in \{0,1\}^{K \times p}$, with $\mathbf{M}_{k,j} = 1$ if $j \in \mathcal{S}^k$ and $0$ otherwise.

Each iteration applies the extrapolation step \eqref{eq:fpg_extrapolate} in a batched fashion:
\[
\widetilde{\mathbf{B}}^{(t)} 
= \mathbf{B}^{(t)} 
+ \frac{t}{t+3}\big(\mathbf{B}^{(t)} - \mathbf{B}^{(t-1)}\big),
\]
which is simply \eqref{eq:fpg_extrapolate} applied row-wise to $\mathbf{B}^{(t)}$. We then compute all gradients simultaneously, in analogy with \eqref{eq:fpg_grad}, via:
\[
\nabla G\big(\widetilde{\mathbf{B}}^{(t)}\big)
= \big(\mathbf{X}^\top(\mathbf{X} \,\widetilde{\mathbf{B}}^{(t)\top} - \mathbf{y}\mathbf{1}^\top)\big)^\top
+ 2\lambda_2 \,\widetilde{\mathbf{B}}^{(t)},
\]
and enforce the support constraints by elementwise masking $\nabla G \leftarrow \nabla G \odot \mathbf{M}$. The projected gradient step \eqref{eq:fpg_step} is likewise applied in one shot to all rows:
\[
\mathbf{B}^{(t+1)} 
= \Pi_{[-M,M]}\big(\widetilde{\mathbf{B}}^{(t)} - \boldsymbol{\alpha} \odot \nabla G(\widetilde{\mathbf{B}}^{(t)})\big) \odot \mathbf{M},
\]
where $\boldsymbol{\alpha} \in \mathbb{R}^K$ stores per-subproblem stepsizes and $\odot$ denotes broadcasting and elementwise multiplication over rows. Finally, the backtracking line search enforces a batched version of the Armijo condition \eqref{eq:fpg_armijo}: we maintain a boolean “active” mask over subproblems, update only the active rows of $\mathbf{B}$ and $\boldsymbol{\alpha}$, and evaluate all corresponding trial losses via batched matrix–matrix products until the inequality in \eqref{eq:fpg_armijo} holds for each active row. These updates are therefore direct vectorized analogues of \eqref{eq:fpg_extrapolate}--\eqref{eq:fpg_armijo}, giving a highly parallel upper-bound algorithm that can process large batches of supports from the BnB tree in a single pass.

\subsection{Initializing the BnB tree with specialized heuristic}\label{sec:MP_outline}

The upper bound algorithm above (Sec~\ref{sec: upper_bound}) is used to quickly obtain a feasible solution given a pre-specified support. 
Thus we also design a heuristic algorithm to generate a high-quality support prior to running the BnB algorithm, starting from scratch. This provides a high-quality upper bound prior to initializing the BnB algorithm. The algorithm is inspired by the classical matching pursuit procedure, a lightweight method based on greedy selection \cite[see also references therein]{freund2015newperspectiveboostinglinear} with forward and backward phases. We adapt the method to use GPU acceleration (especially in selecting the coordinate to update)---see Sec~\ref{sec:matching_pursuit} for details.

\subsection{Considerations and Challenges for BnB on GPUs}\label{sec:challenges}

While GPUs have great promise in BnB, they also bring unique challenges and considerations. We outline below some challenges we have encountered with integrating BnB on GPUs and how we address them. 

\noindent \textbf{Modes of parallelism.} GPUs are highly optimized for SIMD operations, where the same or similar instructions are executed concurrently on multiple threads. If the operations are highly dissimilar this can lead to thread divergence, i.e. a situation in which the same group of threads take different execution paths which forces serialization. The choice of subproblem algorithm itself determines the degree of parallelism that can be achieved. While algorithms like coordinate descent may be more efficient for CPU execution due to their sequential nature, we employ ADMM as it enables decomposition of the problem into subproblems with parallel coordinates. Moreover, we carefully chose the splitting and updates for maximum separability in the coordinate subproblems.

\noindent  \textbf{GPU-CPU communication.} Transferring large data structures regularly from the CPU to the GPU can introduce communication overhead and slow down the execution of BnB. We have designed our algorithm such that the lightweight serial logic (branching, bounding, pruning, etc) is performed on CPUs while the subproblem algorithms are executed on GPUs (see 
Algorithm \ref{algo:BnB-approx}). The only data that needs to be passed between the CPU and GPU are the bound evaluations and, when warm starting is used, the decision variables for a given node.

\noindent \textbf{Memory.} Modern GPUs such as NVIDIA A100, H100, and V100 typically have 32-96GBs of RAM. If not managed properly, this memory can be exhausted very quickly for large instances. We address this by implementing shared memory references for the vectors and matrices required for the lower-bound and upper-bound algorithms in 
Section~\ref{sect:gpubnb}. This means the same vectors and matrices ($D$, $X$, $y$) can be reused at each subproblem.

\section{Experiments}
We compare our BnB algorithm against state-of-the-art approaches for sparse learning in high-dimensional settings. We consider three aspects: computational efficiency compared to existing methods, sensitivity to parameter settings, and performance across diverse datasets. For 
Section~\ref{sec:expts-comp1}, we turn off node parallelism to compare with existing BnB methods which also process nodes sequentially. In Section~\ref{sec:node_parallel}, we show that node parallelism further accelerates this algorithm both in terms of the number of nodes processed and the MIP optimality gap.
\vspace{-3mm}
\subsection{Experimental setup} \label{sect:expsetting}

Our algorithm, GPUBnB, is implemented in Python using {PyTorch} \cite{paszke2017automatic} for GPU acceleration via NVIDIA CUDA. We evaluate it on an NVIDIA A100-PCIe-80GB GPU against four baselines:~(i)~the CPU version of our algorithm (CPUBnB),~(ii)~L0BnB \cite{hazimeh2021sparse} implemented in Python with {Numba} optimization \cite{lam2015numba},and~(iii)~MOSEK \cite{mosek} and GUROBI \cite{gurobi}. All CPU-based methods run on 20 AMD EPYC 9474F cores (3.00 GHz, 80GB RAM, 4 threads).


\noindent \textbf{Datasets.} For synthetic datasets, following \cite{hazimeh2020fast,hazimeh2021sparse}, we generate the data matrix $X \in \mathbb{R}^{n \times p}$ by drawing each row from a multivariate Gaussian distribution $\mathcal{N}(0, \Sigma)$, where $\Sigma = \rho \mathbf{1}_{p\times p} + (1-\rho) \mathbf{I}_{p}$ controls feature correlation through parameter $\rho$. The ground truth coefficient $\beta^{\dagger} \in \mathbb{R}^p$ is constructed with $k^0$ equispaced nonzero entries of value $1$, with remaining entries set to $0$. We generate the response vector as $y = X\beta^{\dagger} + \varepsilon$, where $\varepsilon \sim \mathcal{N}(0, \sigma^2I_n)$ represents i.i.d. Gaussian noise. The signal-to-noise ratio, defined as $\mathrm{SNR} = \operatorname{Var}(X\beta^{\dagger})/\sigma^2$, is used to control problem difficulty: lower SNR indicates higher noise levels relative to the signal. Unless otherwise specified, we set $\rho = 0.2$, $k^0 = 10$. We adjust SNR based on sample size $n$: $(\mathrm{SNR}, n) = (10, 1000)$, $(10/3, 3000)$, and $(0.5, 10000)$.

We also evaluate our proposed algorithm on the real-world UJIIndoorLoc dataset \cite{ujiindoorloc_310}, which focuses on indoor positioning using WLAN/WiFi fingerprinting across multiple buildings and floors. The dataset is split into training (60\%), validation (20\%), and test (20\%) sets. To create a higher-dimensional problem, we augment the feature space by replicating each feature 49 times with random permutations, resulting in a dataset with $n=11,962$ samples and $p=23,250$ features. We preprocess the data by mean-centering and normalizing both the response variable and the feature matrix columns.

\noindent \textbf{Selection of $\lambda_0$, $\lambda_2$, and $M$.} Following \cite{hazimeh2021sparse}, for synthetic datasets we first determine $\lambda_2$ by analyzing the ridge regression solution restricted to the true support $S^{\dagger} = \operatorname{supp}(\tilde{\beta}^{\dagger})$. Specifically, for fixed $\lambda_2$, we define $\beta(\lambda_2)$ as:
\begin{equation}
   \beta(\lambda_2) \in \underset{\beta \in \mathbb{R}^p}{\arg \min} \frac{1}{2}\|y-X\beta\|_2^2 + \lambda_2\|\beta\|_2^2 \quad \text{s.t.} \quad \beta_{(S^{\dagger})^c} = 0.
\end{equation}
The optimal $\lambda_2^*$ is then determined by minimizing the $\ell_2$ estimation error:
\begin{equation}
   \lambda_2^* \in \underset{\lambda_2 \geq 0}{\arg \min} \|\tilde{\beta}^{\dagger}-\beta(\lambda_2)\|_2.
\end{equation}
We estimate $\lambda_2^*$ through grid search over $[10^{-4}, 10^4]$ using 100 logarithmically-spaced points. In our experiments, we typically set $\lambda_2 = \lambda_2^*$, though we also examine other values as fractions or multiples of $\lambda_2^*$. After determining $\lambda_2$, we follow the adaptive selection rules in \cite{hazimeh2020fast} to compute a regularization path of decreasing $\lambda_0$ values: $\lambda_0^1(\lambda_2) > \lambda_0^2(\lambda_2) > \cdots > \lambda_0^m(\lambda_2)$. For each $\lambda_0^i(\lambda_2)$, we apply GPUBnB~to solve problem \eqref{eq:perspective} with $\lambda_0=\lambda_0^i(\lambda_2), \lambda_2=\lambda_2$, $M=1.05\|\beta(\lambda_2)\|_{\infty}$ and a 120 seconds time limit to obtain approximate solution $\hat{\beta}(\lambda_0^i(\lambda_2))$. We select  $\lambda_0^*(\lambda_2)$ that maximizes the F-measure between the support of the approximate solution, i.e., $\operatorname{supp}(\hat{\beta}(\lambda_0^i(\lambda_2),\lambda_2))$, and the true support $S^{\dagger}$. For the big-M parameter, we set $M^*(\lambda_2) = \|\hat \beta(\lambda_2)\|_{\infty}$. Unless otherwise specified, our experiments use $\lambda_0 = \lambda_0^*(\lambda_2)$ and $M = 1.5M^*(\lambda_2^*)$.

For real datasets, where the true coefficient vector $\tilde{\beta}^{\dagger}$ is unknown, we use a different procedure. We compute a grid over $\lambda_0$ and $\lambda_2$: We select 20 log-spaced values of $\lambda_2$ in $[10^{-3}, 10^3]$ and vary $\lambda_0$ to obtain solutions with different sparsity levels. For each parameter pair $(\lambda_0,\lambda_2)$, we apply GPUBnB to solve problem \eqref{eq:perspective} with a 120-second time limit, obtaining approximate solution $\hat{\beta}(\lambda_0,\lambda_2)$. The optimal regularization parameters are then selected using the validation set to minimize least squares loss. For the big-M parameter, similarly, we set $M = 1.5\|\hat \beta(\lambda_0,\lambda_2)\|_{\infty}$.

\noindent \textbf{BnB Settings}.
All BnB methods are applied to the perspective formulation~\eqref{eq:perspective}. For each BnB algorithm, we define the relative optimality gap between the upper bound $UB$ and lower bound $LB$ as $(UB - LB)/UB$. We terminate the algorithm when the relative optimality gap falls below 1\% or when reaching either the maximum runtime of 4 hours or the memory limit of 80GB. Additionally, at the node level, we terminate each relaxation subproblem in the BnB tree when its primal-dual gap becomes smaller than $10^{-4}$.

\vspace{-3mm}
\subsection{Comparison with state-of-the-art BnB methods on synthetic datasets}\label{sec:expts-comp1}

\subsubsection{Varying number of samples and features}
We examine BnB algorithm performance across different problem dimensions. We generate synthetic datasets with varying sizes: the number of samples $n \in \{10^3,3\cdot 10^3,10^4\}$ and the number of features $p \in \{10^4,3\cdot10^4,10^5\}$, fixing $k^0=10$ nonzeros. Parameters $\lambda_0$, $\lambda_2$, and $M$ are selected as described in Section \ref{sect:expsetting}. Table~\ref{tab:np} presents average runtimes over 10 random seeds for each method. We keep node parallelism off for comparison against other methods that do not use node parallelism. The results reveal several key findings. First, compared to the commercial solver MOSEK, the other three methods achieve more than 10x speedup for the smallest problem size ($n=10^3$, $p=10^4$) and can handle larger instances where MOSEK fails due to memory constraints. When we compare among CPU-based methods, L0BnB~\cite{hazimeh2021sparse} generally outperforms CPUBnB, especially when the sample-to-feature ratio ($n/p$) is small, while CPUBnB~shows slightly better performance for larger $n/p$ ratios. We believe this is due to coordinate descent being a superior node relaxation algorithm for small $n$, large $p$ instances (see discussion below). 
Finally, our GPU implementation (GPUBnB) achieves strong speedups of 5-35x over its CPU counterpart (CPUBnB)---the acceleration is more pronounced as the problem size $n \times p$ increases.


\begin{table}[htbp]
\centering
\renewcommand{\arraystretch}{0.6}
\begin{tabular}{l|l|ccccc} \toprule[0.7pt]
\small
$n$ & $p$  & GPUBnB & CPUBnB & L0BnB & MOSEK & GUROBI  \\ \midrule
\multirow{3}{*}{$10^3$}& $10^4$& 23.7 & 149 & 46.9 & 1796 & -\\ 
& $3\!\cdot\!10^4$& 58.4 & 967 & 225 & - & -\\ 
& $10^5$& 355 & 9182 & 847  & - & -\\  \hline
\multirow{3}{*}{$3\!\cdot\!10^3$}& $10^4$& 25.7 & 358 & 74.8 & -& - \\ 
& $3\!\cdot\!10^4$& 67.1 & 1728 & 177 & - & -\\ 
& $10^5$& 272 & 8958 & 504 & - & -\\  \hline
\multirow{3}{*}{$10^4$}& $10^4$& 9.91 & 279 & 328 & - & -\\ 
& $3\!\cdot\!10^4$& 79.9 & 2623  & 688 & - & -\\ 
& $10^5$& 257 & 8911 & 1514 & - & -\\ 
\bottomrule[0.7pt]
\end{tabular}
\caption{
Runtime (in seconds) of our approach versus competing methods averaged over 10 random seeds. `-' indicates an exceeded memory limit (80GB).}
\label{tab:np}
\end{table}

We further analyze the performance differences between L0BnB and our approach. Implementation matters aside, both methods employ identical branch-and-bound frameworks and solve the same MIP relaxation at each node. In both approaches, over 98\% of the runtime is devoted to solving the node relaxation problem \eqref{eq:relaxnode}, with performance differences stemming from their distinct methods: L0BnB uses coordinate descent (CD), while our approach employs ADMM. L0BnB's CD implementation, as described in \cite{hazimeh2021sparse}, incorporates several problem-specific tricks (e.g., active set updates and gradient screening) that enhance its practical performance. However, the inherently sequential nature of CD makes GPU acceleration challenging. In contrast, our ADMM-based approach readily exploits GPU parallelization. Figure \ref{fig:np} compares the runtime to solve the node relaxations for different methods across 10 random seeds. While L0BnB generally outperforms CPUBnB (in runtime) to solve the node relaxation, our GPU implementation (GPUBnB) achieves a 2-10x speedup compared to L0BnB. 
\begin{figure}
    \centering
    \includegraphics[width=0.8\linewidth]{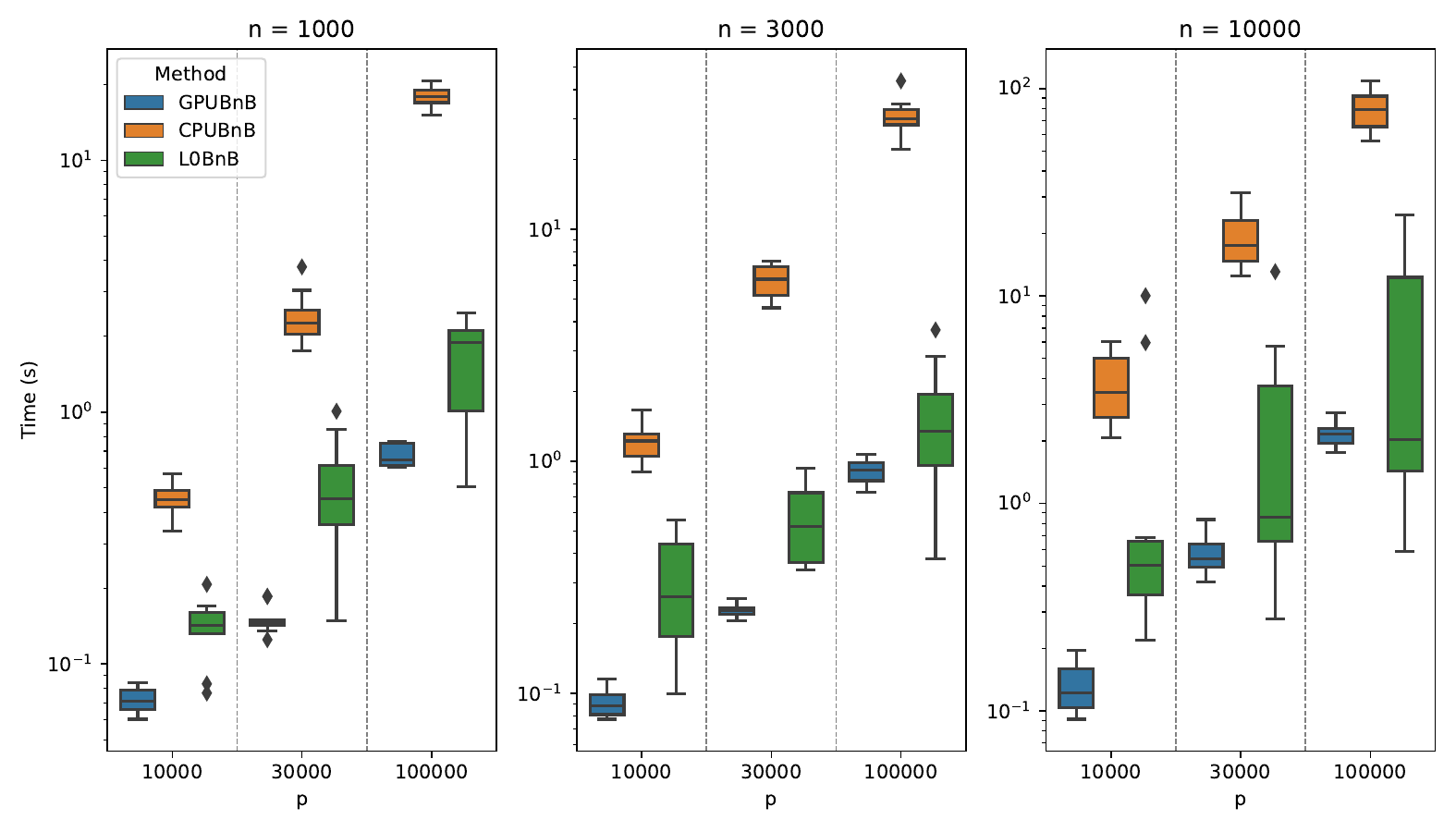}
    \caption{Average node relaxation solving time (in seconds) of our approach versus L0BnB on 10 random seeds.}
    \label{fig:np}
\end{figure}

\vspace{-3mm}
\subsubsection{Impact of problem parameters and computing platforms}

We first examine how various problem parameters affect the performance of each BnB method. We generate synthetic datasets with $n=3000$ samples and $p=30000$ features. We set baseline parameters $\mathrm{SNR}=10/3$, $k^0=10$, with $\lambda_2$ and $\lambda_0$ chosen as described in Section \ref{sect:expsetting}. We then vary one parameter at a time. Specifically, we consider $\mathrm{SNR}\in\{0.5,1,2,3,10\}$ and $k^0\in\{5,10,15,20,25\}$. For the regularization parameters, we sweep $\lambda_2 \in \{0.03, 0.1, 0.3, 3, 10, 30\}\cdot \lambda_2^*$ and $\lambda_0 \in \{0.05, 0.2, 0.4, 0.6, 1.5, 2.0\}\cdot \lambda_0^*$, where $\lambda_2^*$ and $\lambda_0^*$ are optimal parameter values, as defined in Section \ref{sect:expsetting}.

\begin{table}[htbp]
\centering
 \begin{minipage}{0.43\textwidth}\renewcommand{\arraystretch}{0.6}
        \centering
        \begin{adjustbox}{width=0.93\columnwidth,center}
        \begin{tabular}{l|ccc}
           \toprule[0.7pt]
        $\mathrm{SNR}$ & GPUBnB & CPUBnB  & L0BnB \\ \midrule
0.5& (100362) & (7258) & (1846) \\ 
1& 217 & 4992 & (1777) \\ 
2& 59.1 & 1983 & 567 \\ 
3& 56.8 & 1884 & 175\\ 
            \bottomrule[0.7pt]
        \end{tabular}
        \end{adjustbox}
    \end{minipage}
    \begin{minipage}{0.43\textwidth}\renewcommand{\arraystretch}{0.55}
        \centering
        \begin{adjustbox}{width=0.93\columnwidth,center}
        \begin{tabular}{l|ccc}
           \toprule[0.7pt]
$k^0$ & GPUBnB & CPUBnB  & L0BnB \\ \midrule
5& 19.2 & 492 & 1.38 \\ 
10& 102 & 2461 & 233 \\ 
15& 364 & 8669 & 3017 \\ 
20& 1481 & (5578) & (9597) \\ 
\bottomrule[0.7pt]
\end{tabular}
\end{adjustbox}
\end{minipage}
\caption{Runtime (in seconds) for synthetic datasets with $n=3000$ samples and $p=30000$ features, varying SNR (\textbf{left}) and number of nonzeros in the true coefficient $\beta^{\dagger}$ (\textbf{right}).  If an algorithm cannot achieve $10^{-2}$ optimality gap within 4 hours, the number of nodes solved in 4 hours is displayed in brackets.
}
\label{tab:snrk}
\end{table}

\begin{table}[htbp]
\centering
 \begin{minipage}{0.44\textwidth}\renewcommand{\arraystretch}{0.6}
        \centering
        \begin{adjustbox}{width=0.93\columnwidth,center}
        \begin{tabular}{l|ccc}
           \toprule[0.7pt]
$\lambda_2$ & GPUBnB & CPUBnB  & L0BnB \\ \midrule
0.1$\lambda_2^*$& 84.8 & 1947 & 470 \\ 
0.3$\lambda_2^*$& 78.1 & 2077 & 329 \\ 
3$\lambda_2^*$& 24.4 & 608 & 3.43 \\ 
10$\lambda_2^*$& 9.55 & 370 & 1.12 \\ 
30$\lambda_2^*$& 2.03 & 76.7 & 1.01 \\ 
            \bottomrule[0.7pt]
        \end{tabular}
        \end{adjustbox}
    \end{minipage}
    \hfill
    \begin{minipage}{0.50\textwidth}\renewcommand{\arraystretch}{0.6}
        \centering
        \begin{adjustbox}{width=0.93\columnwidth,center}
\begin{tabular}{l|ccc|c}
\toprule[0.7pt]
$\lambda_0$ & GPUBnB & CPUBnB  & L0BnB & $\|\beta\|_0$\\ \midrule
0.05$\lambda_0^*$& (40913) & (1737) & (441) & 16 \\ 
0.2$\lambda_0^*$& 1648 & (4365) & (1521) & 10 \\ 
0.4$\lambda_0^*$& 7.80 & 275 & 37.2 & 10 \\ 
0.6$\lambda_0^*$& 12.3 & 440 & 26.2 & 10 \\ 
2$\lambda_0^*$& 243 & 6399 & 21.1 & 7 \\
\bottomrule[0.7pt]
\end{tabular}
\end{adjustbox}
\end{minipage}
\caption{ Runtime (in seconds) for synthetic datasets with $n=3000$ samples and $p=30000$ features, varying $\lambda_2$ (\textbf{left}) and $\lambda_0$ (\textbf{right}). For varying $\lambda_0$, we also report the number of nonzeros in the solution $\beta$ obtained by each BnB method. If an algorithm cannot achieve $10^{-2}$ optimality gap within 4 hours, the number of nodes solved in 4 hours is displayed in brackets.
}
\label{tab:l2l0}
\end{table}
Tables \ref{tab:snrk} and \ref{tab:l2l0} compare L0BnB with our approach across these settings (we exclude MOSEK due to memory constraints). For relatively easy-to-solve problems (low noise with $\mathrm{SNR} \geq 3$ or small $k^0$), L0BnB demonstrates superior performance. However, our approach performs better for harder instances (high noise or large $k^0$), with GPUBnB achieving up to 10x speedup over L0BnB. Similar patterns emerge when considering different regularization parameters: L0BnB excels with larger $\lambda_2$ or $\lambda_0$ but slows down for smaller ones, while CPUBnB maintains stable performance and outperforms L0BnB for smaller $\lambda_2$ and  $\lambda_0$ values. Our GPU implementation (GPUBnB) consistently achieves 20-30x speedup over CPUBnB across all parameter settings. To further analyze GPU acceleration benefits, we evaluate our approach on various computing platforms (with specifications displayed in Table \ref{tab:gpu}). The results demonstrate that GPU acceleration effectiveness varies with problem size and the GPU hardware used. 
\begin{table}
    \centering
    \renewcommand{\arraystretch}{0.95}
        \begin{adjustbox}{width=0.6\columnwidth,center}
        \begin{tabular}{c|ccc}
            \toprule[0.6pt]
            platform & processor  & theoretical peak (FP64) & bandwidth \\ \midrule
            CPU &  AMD EPYC 9474F 3.60GHz & 66 GFLOPs & 460.8GB/s  \\
            V100 & NVIDIA V100-PCIe-32GB & 7.1 TFLOPs& 0.9TB/s\\ 
            A100 & NVIDIA A100-PCIe-80GB & 9.7 TFLOPs & 1.9TB/s\\ 
            L40S & NVIDIA L40S-PCIe-48GB & 1.4 TFLOPs & 0.9TB/s \\ 
            H100 & NVIDIA H100-PCIe-80GB & 25.6 TFLOPs  & 2TB/s \\ 
        \bottomrule[0.7pt]
        \end{tabular}
        \end{adjustbox}
        \caption{Specifications of the computing platforms used.}
        \label{tab:gpu}
\end{table}

\subsection{Real-world dataset}\label{sec:exp-real-world}

We evaluate GPUBnB and CPUBnB on the UJIIndoorLoc dataset ($n=11,962$ samples, $p=23,250$ features), again without node parallelism. We generate solutions with varying sparsity levels by adjusting $\lambda_0$ as multiples of $\lambda_0^*$. Figure~\ref{fig:real} displays the best optimality gaps achieved within 4 hours and the average node-solving times, respectively. On this dataset, GPUBnB processes each node approximately 30 times faster than its CPU counterpart, enabling it to achieve much smaller optimality gaps within the 4-hour time limit.

\begin{figure}[H]
    \centering
    \begin{subfigure}{0.4\textwidth} 
        \centering
        \includegraphics[width=\textwidth]{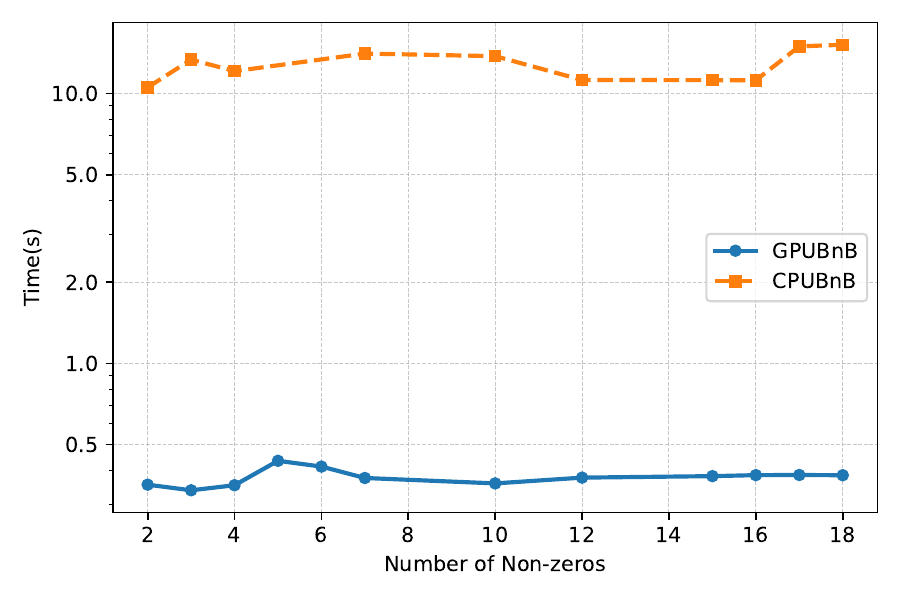} 
    \end{subfigure}
    \begin{subfigure}{0.4\textwidth} 
        \centering
        \includegraphics[width=\textwidth]{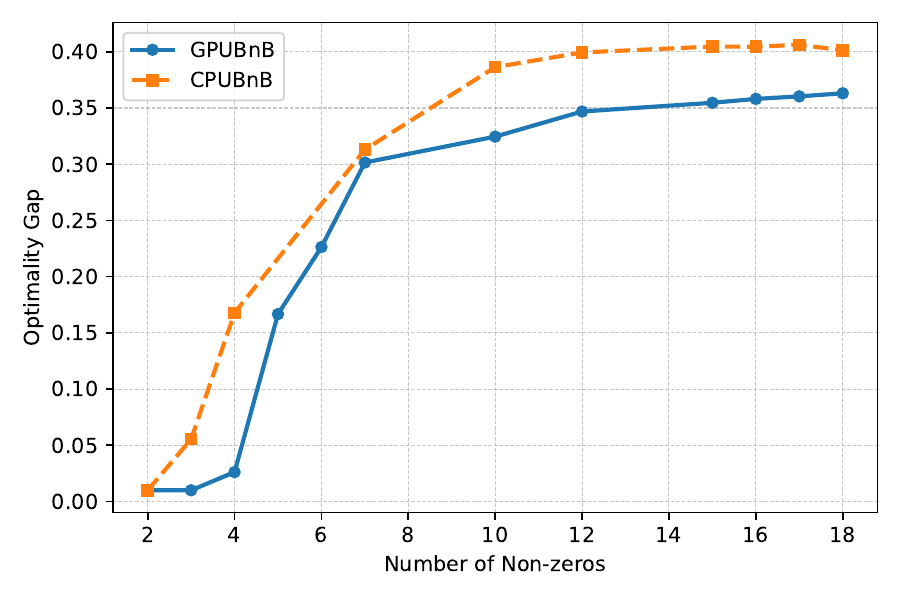} 
    \end{subfigure}
    \caption{Performance comparison on UJIIndoorLoc dataset: (\textbf{left}) average node solving time in seconds, and (\textbf{right}) best optimality gap achieved within 4 hours, for GPU and CPU implementations of our approach.}
    
    \label{fig:real}
    
\end{figure}

\subsection{Node parallelism}\label{sec:node_parallel}

As outlined in Section~\ref{subsect:ADMM_parallel}, our proposal leverages GPU parallelization by solving multiple branch-and-bound nodes simultaneously. To quantify the benefits of this approach, we conducted experiments varying the batch size $K$—defined as the number of nodes processed in parallel during the lower-bound evaluation step.

\noindent \textbf{Simulated Experiment.} We use the same data generation protocol described in Section \ref{sect:expsetting} to ensure consistency; specifically, we utilize the same synthetic setup with $n=3000$, $\rho=0.2$, $k^0 = 10$, and fix the algorithm to {GPUBnB}. However, unlike the previous experiments (in Sections~\ref{sec:expts-comp1}, \ref{sec:exp-real-world}) where the parallelization strategy was fixed to sequential ($K = 1$), we now explicitly vary the batch size $K$—the number of nodes processed simultaneously by the GPU. We test varying the feature dimensions $p \in \{10^4, 3\times10^{4}, 10^5\}$. We test a range of batch sizes $K \in \{1, 10, 50, 100\}$. Note that $K=1$ corresponds to a standard strategy without node parallelism, while e.g. $K=50$ represents a strategy of processing 50 nodes in parallel at each step. All experiments were conducted with a fixed time limit of 3600 seconds. For $p \in \{10^4, 3 \times 10^4\}$, we run the experiments on an NVIDIA A100-PCIe-80GB GPU and for $p = 10^5$ we conduct the experiment on a NVIDIA H100-PCIe-80GB GPU.

\noindent \textbf{Results.} Figure \ref{fig:node_parallel_syn} demonstrates the impact of different batch sizes on {GPUBnB} performance for the synthetic instances. The top plot illustrates the MIP optimality gap reduction over time, while the bottom plot tracks the cumulative number of nodes processed.
We observe that increasing the batch size can provide significant improvements in node throughput. For the largest problem setting $p=10^5$, using a batch size of $K=100$ allows the algorithm to process over 25x as many nodes per second compared to the sequential baseline. This throughput advantage translates directly to convergence speed. Larger batch sizes reduce the optimality gap down much faster in the early stages of the search. While larger batches can theoretically introduce some search overhead (processing nodes that might have been pruned given updated bounds from a sequential search), the speedup provided by saturating the GPU cores appear to greatly outweigh this inefficiency (at least in the examples we consider). Crucially, this strong increase in throughput does not lead to a proportional increase in GPU memory consumption. For example, at $p=3 \times 10^4$, increasing the batch size from $K=1$ to $K=100$ results in a 32\% increase in peak memory, despite a massive gain in processing speed. This efficiency is achieved through our shared memory reference scheme described in section \ref{sec:challenges}.

\begin{figure}[H]
    \centering
    \includegraphics[width=0.75\linewidth]{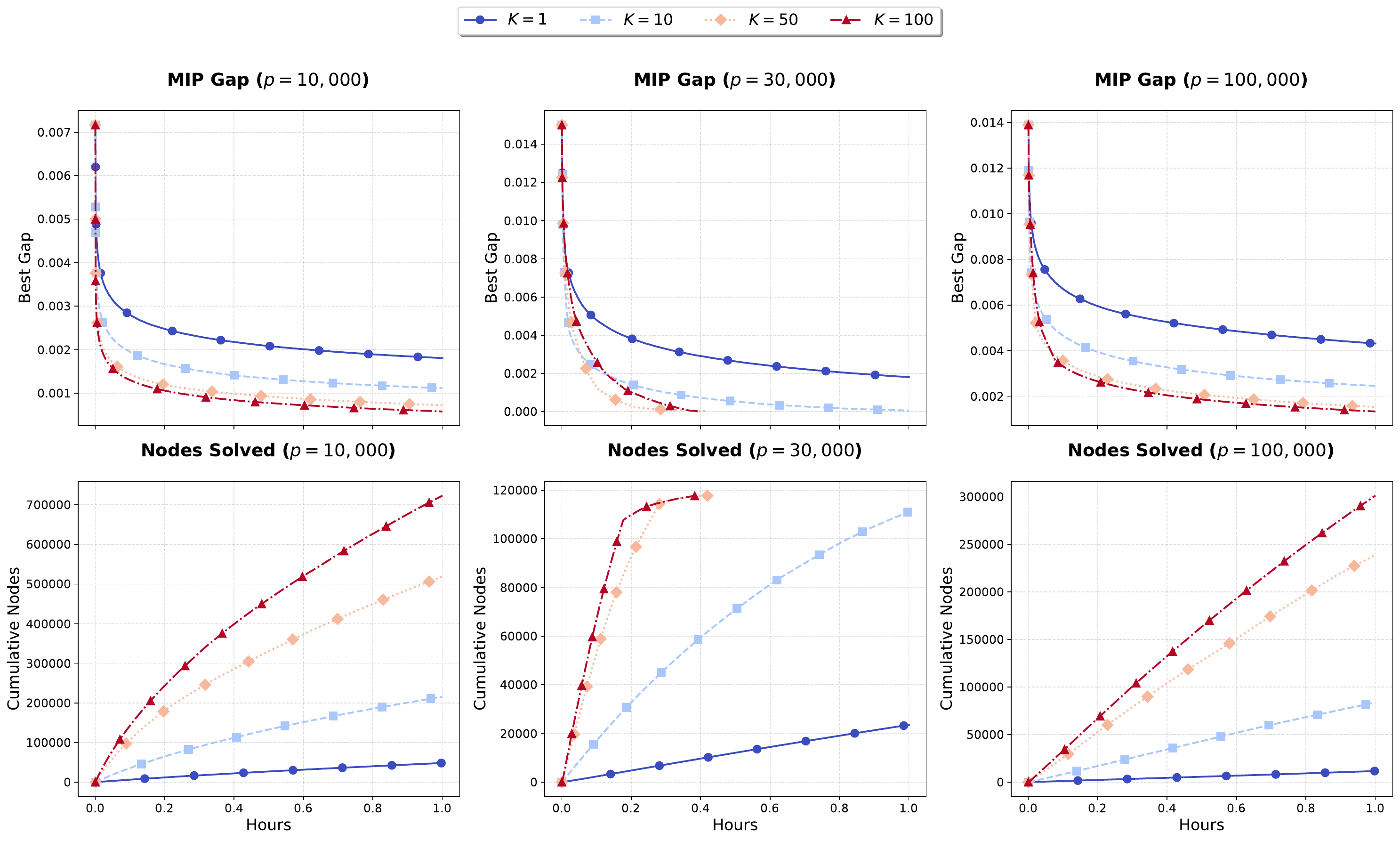}
    \caption{Synthetic examples: MIP gap (top) and number of nodes processed (bottom) with different batch sizes.}
    \label{fig:node_parallel_syn}
\end{figure}

\begin{table}[h]
\centering
\setlength{\tabcolsep}{4pt}
\renewcommand{\arraystretch}{1.05}
\begin{tabular}{l | cc | cc | cc}
\hline
 & \multicolumn{2}{c|}{$p = 10^4$} & \multicolumn{2}{c|}{$p = 3\times10^4$} & \multicolumn{2}{c}{$p = 10^5$} \\
Batch ($K$) & Mem (GB) & Rel. Inc. & Mem (GB) & Rel. Inc. & Mem (GB) & Rel. Inc. \\
\hline
1   & 0.636 & 1.00x & 1.285 & 1.00x & 2.824 & 1.00x \\
10  & 0.652 & 1.03x & 1.310 & 1.02x & 2.983 & 1.06x \\
50  & 0.726 & 1.14x & 1.459 & 1.14x & 3.693 & 1.31x \\
100 & 0.816 & 1.28x & 1.696 & 1.32x & 4.580 & 1.62x \\
\hline
\end{tabular}
\caption{Peak memory usage (GB) and relative increase in peak memory compared to the $K=1$ baseline as $p$ varies.}
\end{table}

\noindent \textbf{Real-World Dataset.}
We also evaluate node parallelism on the Census response dataset from~\cite{Ibrahim_2025}, a real-world regression problem in which the dependent variable is the self-response rate from the 2013--2017 American Community Survey. We augment the original feature set with all degree-two interaction terms, giving a dataset with $n = 43{,}197$ samples and $p = 43{,}955$ features. This is a large-scale problem in both $n$ and $p$, which causes challenges even for state-of-the-art algorithms such as {L0BnB}. Figure~\ref{fig:census_gap_nodes_relative} reports the MIP optimality gap over wall-clock time (top) and the cumulative number of nodes solved since the first node was solved (bottom), for batch sizes $K \in \{1, 5, 10, 30\}$ alongside {L0BnB}. The results clearly demonstrate that node parallelism substantially accelerates both metrics relative to sequential node processing ($K=1$). Larger batch sizes solve significantly more nodes within the same time budget, and correspondingly achieve much smaller MIP gaps over the course of the run. For instance, $K=30$ processes over 10 times as many nodes as $K=1$ within the allotted time and consistently maintains a lower optimality gap throughout.

\begin{figure}[H]
    \centering
    \includegraphics[width=0.6\linewidth]{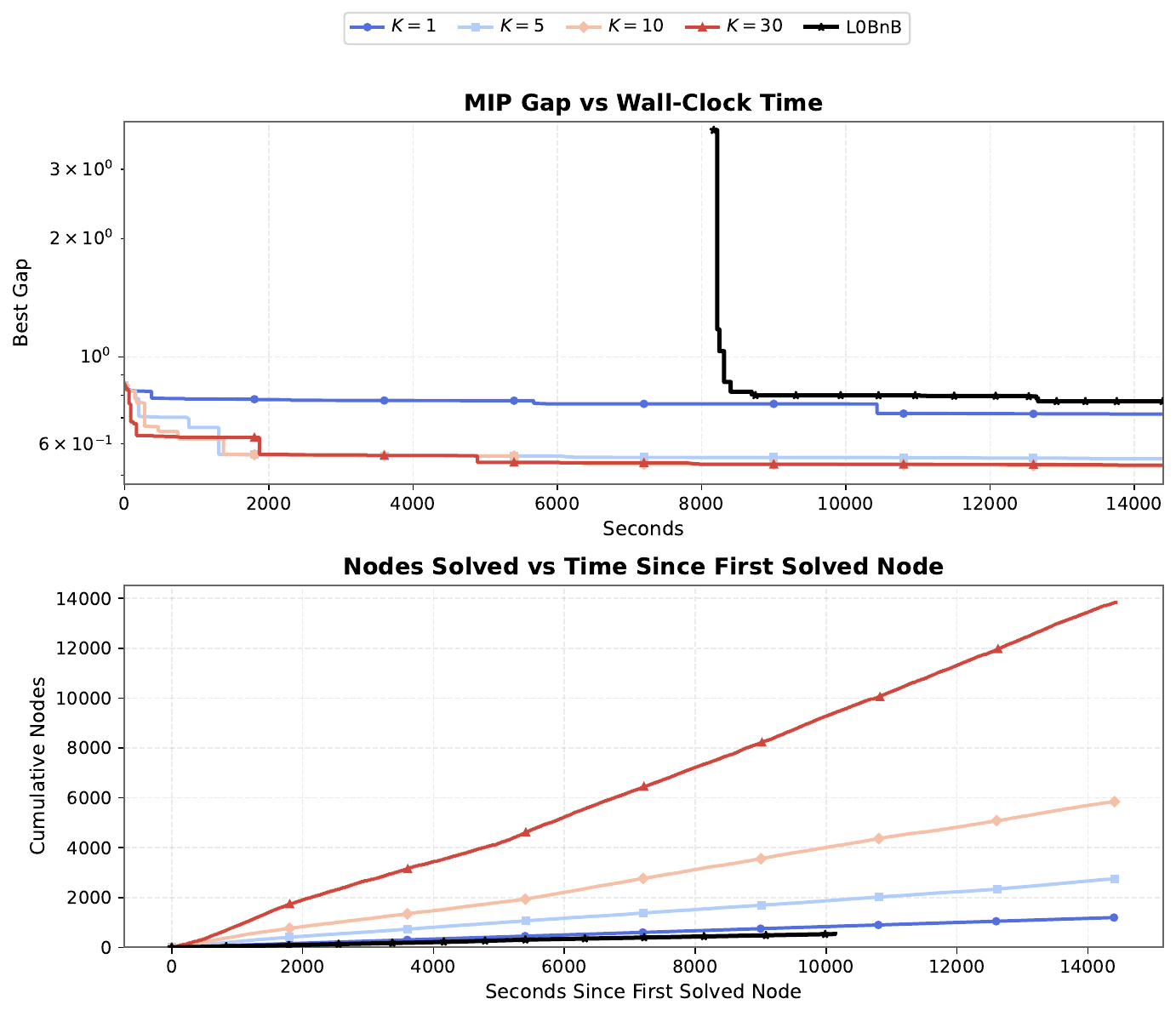}
    \caption{Census dataset: MIP optimality gap over wall-clock time (top) and cumulative nodes solved since the first solved node (bottom), for  {GPUBnB} with batch sizes $K \in \{1, 5, 10, 30\}$ and  {L0BnB}. Larger batch sizes substantially reduce the optimality gap and process far more nodes within the same time budget.  {L0BnB} struggles on this large-scale instance due to the high per-node cost imposed by large $n$ and $p$.}
    \label{fig:census_gap_nodes_relative}
\end{figure}

\section*{Acknowledgment}

The authors acknowledge research support funding from the Office of Naval Research Grant N000142512504.

\bibliographystyle{plain}
\bibliography{arXiv/references}

\appendix
\section{Additional Technical Details}\label{sect:app-technical}

\smallskip

\subsection{Derivation of problem \eqref{eq:relaxnode2}} We simplify problem \eqref{eq:relaxnode} by analytically solving for the optimal values of variables $z$ and $s$ for any fixed, feasible $\beta$, and then substituting these optimal solutions back into the original problem. This reduction allows us to reformulate the problem solely in terms of $\beta$.

Consider any feasible solution $(\beta, z, s)$ to problem \eqref{eq:relaxnode}, it follows $|\beta_i|\le Mz_i \le M,\,\forall i\in[p]$. The minimization with respect to $z$ and $s$ can be decomposed into $p$ independent subproblems, one for each coordinate $i \in [p]$. Specifically, for each $i \in [p]$, we solve:
\begin{equation} \label{eq:minzs}
\begin{aligned}
 \min _{z_i, s_i} \quad &\lambda_0 z_i +\lambda_2 s_i \\
\text { s.t.} \quad & \beta_i^2 \leq s_i z_i, \,\, |\beta_i| \leq M z_i, \\
& z_i \in [0,1],\, s_i \geq 0,  \\
& z_i=0 \text{ if }\,i \in \mathcal{F}_0,\, z_i=1\text{ if }i \in \mathcal{F}_1.
\end{aligned}
\end{equation}
We discuss the following scenarios.
\begin{enumerate}[ left=0pt, labelsep=1em]
    \item  $i \in \mathcal{F}_0$. When $|\beta_i|\neq 0$, problem \eqref{eq:minzs} is infeasible; when $\beta_i=0$, the only feasible solution is $(z_i^*,s_i^*)=(0,0)$, and the minimum is $0$.
    \item $i \in \mathcal{F}_1$. The optimal solution to problem \eqref{eq:minzs} is $(z_i^*,s_i^*)=(1,\beta_i^2)$, and the minimum is $\lambda_0+\lambda_2\beta_i^2$.
    \item $i \notin \mathcal{F}_0 \cup \mathcal{F}_1$. Note that $s_i=\beta_i^2/z_i$ is the smallest possible value $s$, which satisfies the constraint of problem \eqref{eq:minzs} (for the case $\beta_i=z_i=0$, define $\beta_i^2/z_i=0$). Problem \eqref{eq:minzs} then reduced to 
    \begin{equation} \label{eq:minz}
    \begin{aligned}
     \min _{z_i} \quad &\lambda_0 z_i +\lambda_2 \frac{\beta_i^2}{z_i} \\
    \text { s.t.} \quad &  \frac{|\beta_i|}{M} \leq z_i\le 1. \\
    \end{aligned}
    \end{equation}
    Note that the objective function $z_i \mapsto \lambda_0 z_i +\lambda_2 \frac{\beta_i^2}{z_i}$ is convex and its gradient reaches zero when $z_i=\sqrt{\frac{\lambda_2}{\lambda_0}}|\beta_i|$.  We consider three cases: 
    \begin{enumerate}
        \item$1\le \sqrt{\frac{\lambda_2}{\lambda_0}}|\beta_i|$. The objective is monotonically decreasing within the feasible region, therefore $(z_i^*,s_i^*)=(1,\beta_i^2/z_i^*)$ and the minimum is $\lambda_0+\lambda_2\beta_i^2$.
        \item $\frac{|\beta_i|}{M} \leq \sqrt{\frac{\lambda_2}{\lambda_0}}|\beta_i| \le 1$. By optimality condition, $(z_i^*,s_i^*)=\left(\sqrt{\frac{\lambda_2}{\lambda_0}}|\beta_i|,\beta_i^2/z_i^*\right)$ and the minimum is $2\sqrt{\lambda_0\lambda_2}|\beta_i|$.
        \item $\sqrt{\frac{\lambda_2}{\lambda_0}}|\beta_i|\le \frac{|\beta_i|}{M}$. The objective is monotonically increasing within the feasible region, therefore $(z_i^*,s_i^*)=\left(\frac{|\beta_i|}{M},\beta_i^2/z_i^*\right)$, the minimum is $\left(\frac{\lambda_0}{M}+\lambda_2M\right)|\beta_i|$.
    \end{enumerate}
\end{enumerate}
Summing up all the above cases, we arrive at the simplified problem \eqref{eq:relaxnode2}.

\smallskip

\textbf{Derivation of equation \eqref{eq:minbetalower}}. For any given $i\in [p]$ and $\tilde \beta_i$, we aim to derive
\begin{equation}\label{eq:minbetalower-app}
\beta_i^*=\argmin_{\beta_i} \frac{\rho}{2} (\beta_i - \tilde{\beta_i})^2 + \psi_i\left(\beta_i ; \lambda_0, \lambda_2, M\right) + \infty \cdot \mathbf{1}_{\left\{|\beta_i| \leq M\right\}} 
\end{equation}
We discuss the following scenarios.
\begin{enumerate}[ left=0pt, labelsep=1em]
    \item  $i \in \mathcal{F}_0$. In this case $\psi_i\left(\beta_i ; \lambda_0, \lambda_2, M\right)= \infty\cdot \textbf{1}_{\{\beta_i=0\}}$, and it is clear that $\beta_i^*=0$.
    \item $i \in \mathcal{F}_1$. In this case $\psi_i\left(\beta_i ; \lambda_0, \lambda_2, M\right)= \lambda_0 + \lambda_2 \beta_i^2$, and it follows from first-order optimality conditions that \begin{equation}
        \beta_i^*=\operatorname{proj}_{[-M, M]} \left( \frac{\rho}{\rho + 2 \lambda_2} \tilde{\beta}_i \right) = T\left(\frac{\rho}{\rho+2\lambda_2}\tilde{\beta_i},0,M\right).
    \end{equation}
    \item $i \notin \mathcal{F}_0 \cup \mathcal{F}_1$ and $\sqrt{\frac{\lambda_0}{\lambda_2}}\le M$. In this case 
    \begin{equation}
       \psi_i\left(\beta_i ; \lambda_0, \lambda_2,M\right)= \begin{cases}2 \sqrt{\lambda_0 \lambda_2}\left|\beta_i\right| & \text { if }\left|\beta_i\right| \leq \sqrt{\lambda_0 / \lambda_2} \\ \lambda_0+\lambda_2 \beta_i^2 & \text { else if }\sqrt{\lambda_0 / \lambda_2}\le \left|\beta_i\right| \le M.\end{cases}
    \end{equation}
     \begin{enumerate}
     \item If $|\beta_i^*| \le \sqrt{\lambda_0 / \lambda_2}$, it follows from the first-order optimality conditions that $\beta_i^*$ is given by the boxed version of the soft thresholding operator \cite{friedman2010regularization}, i.e., $\beta_i^*=T\left(\tilde{\beta}_i ; \frac{2\sqrt{\lambda_0 \lambda_2}}{\rho}, M\right)$. This is valid as long as $\left|T\left(\tilde{\beta}_i ; \frac{2\sqrt{\lambda_0 \lambda_2}}{\rho}, M\right)\right|\le \sqrt{\frac{\lambda_0}{\lambda_2}}$, which implies $|\tilde{\beta}_i| \leq \frac{2\sqrt{\lambda_0 \lambda_2}}{\rho}+\sqrt{\frac{\lambda_0}{\lambda_2}}$.
     \item If $|\beta_i^*| > \sqrt{\lambda_0 / \lambda_2}$, similar to Case 2, $\beta_i^*=T\left(\frac{\rho}{\rho+2\lambda_2}\tilde{\beta_i},0,M\right)$. This is valid as long as $|T\left(\frac{\rho}{\rho+2\lambda_2}\tilde{\beta_i},0,M\right)|> \sqrt{\frac{\lambda_0}{\lambda_2}}$, which implies $|\tilde{\beta}_i| > \frac{2\sqrt{\lambda_0 \lambda_2}}{\rho}+\sqrt{\frac{\lambda_0}{\lambda_2}}$.
     \end{enumerate}
    \item $i \notin \mathcal{F}_0 \cup \mathcal{F}_1$ and $\sqrt{\frac{\lambda_0}{\lambda_2}} > M$. In this case $\psi_i\left(\beta_i ; \lambda_0, \lambda_2, M\right)= \left(\lambda_0 / M+\lambda_2 M\right)\left|\beta_i\right|$. Similar to Case 3(b), the optimal solution is given by the boxed version of the soft thresholding operator $\beta_i^*=T\left(\tilde{\beta}_i ; \frac{\lambda_0}{M\rho}+\frac{\lambda_2 M}{\rho}, M\right)$.
\end{enumerate}
Summing up all the above cases, we arrive at equation \eqref{eq:minbetalower}.

\smallskip

\textbf{Proof of Proposition \ref{thm:dual}}. The Lagrangian function of problem \eqref{eq:ADMM1} is 
\begin{equation} \label{eq:ADMMlag}
L(b,\beta, v) =\frac{1}{2}\|y-X b\|_2^2+\sum_{i \in[p]} \psi_i\left(\beta_i ; \lambda_0, \lambda_2, M\right)  + \infty \cdot \mathbf{1}_{\left\{\|\beta\|_{\infty} \leq M\right\}} +v^\top(b-\beta)
\end{equation}
The Lagrangian dual reads:
\begin{equation} \label{eq:ADMMdual}
\max_{v\in\mathbb{R}^p} \min_{b,\beta\in\mathbb{R}^p} \frac{1}{2}\|y-X b\|_2^2+\sum_{i \in[p]} \psi_i\left(\beta_i ; \lambda_0, \lambda_2, M\right)  + \infty \cdot \mathbf{1}_{\left\{\|\beta\|_{\infty} \leq M\right\}} +v^\top(b-\beta)
\end{equation}
The inner minimization problem of \eqref{eq:ADMMdual} decomposes into the following separable minimization subproblems:
\begin{equation} \label{eq:ADMMdual2}
\begin{aligned}
 & \min_{b,\beta} \quad  \frac{1}{2}\|y-X b\|_2^2+\sum_{i \in[p]} \psi_i\left(\beta_i ; \lambda_0, \lambda_2, M\right)  + \infty \cdot \mathbf{1}_{\left\{\|\beta\|_{\infty} \leq M\right\}} +v^\top(b-\beta) \\
  = &\, \min_b \frac{1}{2}\|y-X b\|_2^2 + v^\top b + \sum_{i \in[p]} \left(\min_{\beta_i}\psi_i\left(\beta_i ; \lambda_0, \lambda_2, M\right) + \infty \cdot \mathbf{1}_{\left\{|\beta_i| \leq M\right\}}-v_i\beta_i\right) 
\end{aligned}
\end{equation}
If $v \notin \text{row}(X)$, there exists $b' \in \text{null}(X)$ with $v^\top b' \neq 0$. Then for any feasible solution $b$, the vector $b + \alpha b'$ remains feasible for all $\alpha \in \mathbb{R}$, and $v^\top(b + \alpha b')$ can be made arbitrarily small. This implies the minimization problem is unbounded below. Therefore, any dual feasible $v$ must lie in $\text{row}(X)$, i.e., $v = X^\top r$ for some $r \in \mathbb{R}^n$. Substituting this representation into \eqref{eq:ADMMdual2} yields
\begin{equation} \label{eq:ADMMdual3}
 \min_b \frac{1}{2}\|y-X b\|_2^2 +r^\top Xb + \sum_{i \in[p]} \left(\min_{\beta_i}\psi_i\left(\beta_i ; \lambda_0, \lambda_2, M\right) + \infty \cdot \mathbf{1}_{\left\{|\beta_i| \leq M\right\}}-(X_i^\top r)\beta_i\right) 
\end{equation}
It follows from the first-order optimality condition that the minimizer $b^*$ of the first subproblem satisfies $r=y-Xb^*$, and the minimum is $-\frac12\|r\|_2^2+y^\top r$. Next, we discuss how to solve the following problem for a given $i\in[p]$:
\begin{equation} \label{eq:ADMMdual4}
\min_{\beta_i}\psi_i\left(\beta_i ; \lambda_0, \lambda_2, M\right) + \infty \cdot \mathbf{1}_{\left\{|\beta_i| \leq M\right\}}-(X_i^\top r)\beta_i
\end{equation}
We discuss the following scenarios.
\begin{enumerate}[ left=0pt, labelsep=1em]
    \item  $i \in \mathcal{F}_0$. In this case $\psi_i\left(\beta_i ; \lambda_0, \lambda_2, M\right)= \infty\cdot \textbf{1}_{\{\beta_i=0\}}$, and it is clear that $\beta_i^*=0$ and the minimum is $0$.
    \item $i \in \mathcal{F}_1$. In this case $\psi_i\left(\beta_i ; \lambda_0, \lambda_2, M\right)= \lambda_0 + \lambda_2 \beta_i^2$, and it follows from first-order optimality conditions that $\beta_i^*=\operatorname{proj}_{[-M, M]} \left( \frac{X_i^\top r}{ 2 \lambda_2}  \right)$. The minimum of \eqref{eq:ADMMdual4}, obtained by substituting $\beta_i^*$, is $-h(|X_i^\top r|)$. Here, $h:\mathbb{R}_{\ge0}\to\mathbb{R}$ is defined as 
    \begin{equation}\label{eq:hdef2}
    h(x)  =  \begin{cases} x^2/4\lambda_2-\lambda_0& \text { if } x\le 2M\lambda_2\\
    Mx-\lambda_0-\lambda_2M^2 & \text { otherwise.} 
    \end{cases}
    \end{equation}
    \item $i \notin \mathcal{F}_0 \cup \mathcal{F}_1$ and $\sqrt{\frac{\lambda_0}{\lambda_2}}\le M$. In this case 
    \begin{equation}
       \psi_i\left(\beta_i ; \lambda_0, \lambda_2,M\right)= \begin{cases}2 \sqrt{\lambda_0 \lambda_2}\left|\beta_i\right| & \text { if }\left|\beta_i\right| \leq \sqrt{\lambda_0 / \lambda_2} \\ \lambda_0+\lambda_2 \beta_i^2 & \text { else if }\sqrt{\lambda_0 / \lambda_2}\le \left|\beta_i\right| \le M.\end{cases}
    \end{equation}
    For the minimizer $\beta_i^*$: if $\sqrt{\lambda_0 / \lambda_2} \leq |\beta_i^*| \leq M$, then the minimum is $-h(|X_i^\top r|)$ following Case 2; if $|\beta_i^*| < \sqrt{\lambda_0 / \lambda_2}$, then by first-order optimality conditions, $\beta_i^* = 0$ and the minimum is 0. Moreover, since $\sqrt{\lambda_0 \lambda_2}|\beta_i| \leq \lambda_0 + \lambda_2\beta_i^2$, the minimum from Case 2 cannot exceed that from Case 3. This implies $0 \leq -h(|X_i^\top r|)$ when $|\beta_i^*| < \sqrt{\lambda_0 / \lambda_2}$. Thus, the minimum can be compactly expressed as $-\left[h(|X_i^\top r|)\right]_+$.
    \item $i \notin \mathcal{F}_0 \cup \mathcal{F}_1$ and $\sqrt{\frac{\lambda_0}{\lambda_2}} > M$. In this case $\psi_i\left(\beta_i ; \lambda_0, \lambda_2, M\right)= \left(\lambda_0 / M+\lambda_2 M\right)\left|\beta_i\right|$. By the first-order optimality conditions, $\beta_i^*$ can only take three values: $0$, $-M$, or $M$. Comparing these cases, the minimum value is $-\left[M|X_i^\top r|-\lambda_0-\lambda_2M^2\right]_+$.
\end{enumerate}
Summing up all the above cases, the Lagrangian dual can be expressed as \begin{equation}\label{eq:dual2}
\max_{r\in \mathbb{R}^n} \,\,\,-\frac{1}{2}\|r\|_2^2+y^\top r -\sum_{i\in[p]} \nu_i\left( \left|X_i^\top r\right|\right) 
\end{equation}
where 
\begin{equation}\label{eq:nudef2}
\nu_i(x)   = \begin{cases}
0, & \text{if } i \in \mathcal{F}_0,\\
h(x), & \text{if } i \in \mathcal{F}_1,\\
[h(x)]_+, & \text{if } i \notin \mathcal{F}_0 \cup \mathcal{F}_1 \text{ and } \sqrt{\frac{\lambda_0}{\lambda_2}} \le M,\\
\left[ Mx - \lambda_0 - \lambda_2 M^2 \right]_+, & \text{if } i \notin \mathcal{F}_0 \cup \mathcal{F}_1 \text{ and } \sqrt{\frac{\lambda_0}{\lambda_2}} > M.
\end{cases}
\end{equation}
Strong duality holds for \eqref{eq:ADMM1} since it satisfies Slater’s condition \cite{bertsekas1997nonlinear}.

\subsection{Matching Pursuit Algorithm}\label{sec:matching_pursuit}

As outlined in the main text, we implement a matching pursuit-inspired algorithm at the root node, with forward and backward phases. We now explain how the algorithm works in detail. During the forward phase, we consider adding variables \( j \notin S \) to the current support set \( S \). For each candidate, we solve the optimization problem:

\[
\beta_j = \arg\min_{\beta_j \in [-M, M]} \left\{ \tfrac{1}{2} \| r - X_j \beta_j \|_2^2 + \lambda_2 \beta_j^2 + \lambda_0 \right\},
\]

where \( r = y - X_S \beta_S \) is the residual vector. The solution involves computing the unconstrained minimizer \( \beta_j^\ast = \frac{c_j}{D_j} \), where $c_j = X_j^\top r, \quad D_j = \| X_j \|_2^2 + 2\lambda_2$, followed by projection onto \([-M, M]\):
\[
\beta_j = \operatorname{Proj}_{[-M, M]} \left( \beta_j^\ast \right) = \min\{ \max\{ \beta_j^\ast, -M \}, M \}.
\]
The objective change for candidate $j$ keeping other coordinates frozen is given by:
\[
\Delta_j = -\beta_j c_j + \tfrac{1}{2} D_j \beta_j^2 + \lambda_0.
\]
Similarly, for backward selection, we fix the residuals and assess the impact of removing variables \( j \in S \) by setting \( \beta_j = 0 \):
\[
\Delta_j = \beta_j c_j + \left( \tfrac{1}{2} \| X_j \|_2^2 - \lambda_2 \right) \beta_j^2 - \lambda_0.
\]
For both forward and backward steps, these values can be batched and calculated for all candidate variables using vectorized operations.



\end{document}